\begin{document}

\newtheorem{tm}{Theorem}[section]
\newtheorem{prop}[tm]{Proposition}
\newtheorem{defin}[tm]{Definition} 
\newtheorem{coro}[tm]{Corollary}
\newtheorem{lem}[tm]{Lemma}
\newtheorem{assumption}[tm]{Assumption}
\newtheorem{rk}[tm]{Remark}
\newtheorem{nota}[tm]{Notation}
\numberwithin{equation}{section}

\newcommand{\stk}[2]{\stackrel{#1}{#2}}
\newcommand{\dwn}[1]{{\scriptstyle #1}\downarrow}
\newcommand{\upa}[1]{{\scriptstyle #1}\uparrow}
\newcommand{\nea}[1]{{\scriptstyle #1}\nearrow}
\newcommand{\sea}[1]{\searrow {\scriptstyle #1}}
\newcommand{\csti}[3]{(#1+1) (#2)^{1/ (#1+1)} (#1)^{- #1
 / (#1+1)} (#3)^{ #1 / (#1 +1)}}
\newcommand{\RR}[1]{\mathbb{#1}}

\newcommand{\rd}{{\mathbb R^d}}
\newcommand{\ep}{\varepsilon}
\newcommand{\rr}{{\mathbb R}}
\newcommand{\alert}[1]{\fbox{#1}}
\newcommand{\eqd}{\sim}
\def\p{\partial}
\def\R{{\mathbb R}}
\def\N{{\mathbb N}}
\def\Q{{\mathbb Q}}
\def\C{{\mathbb C}}
\def\l{{\langle}}
\def\r{\rangle}
\def\t{\tau}
\def\k{\kappa}
\def\a{\alpha}
\def\la{\lambda}
\def\De{\Delta}
\def\de{\delta}
\def\ga{\gamma}
\def\Ga{\Gamma}
\def\ep{\varepsilon}
\def\eps{\varepsilon}
\def\si{\sigma}
\def\Re {{\rm Re}\,}
\def\Im {{\rm Im}\,}
\def\E{{\mathbb E}}
\def\P{{\mathbb P}}
\def\Z{{\mathbb Z}}
\def\D{{\mathbb D}}
\newcommand{\ceil}[1]{\lceil{#1}\rceil}

\title{Global classical solutions, stability  of constant equilibria, and spreading speeds in attraction-repulsion chemotaxis systems with  logistic source on $\mathbb{R}^{N}$}

\author{
Rachidi B. Salako and Wenxian Shen  \\
Department of Mathematics and Statistics\\
Auburn University\\
Auburn University, AL 36849\\
U.S.A. \\
\\
Dedicated to the memory of Professor George Sell
}

\date{}
\maketitle

\begin{abstract}
In this paper, we consider the following chemotaxis systems of  parabolic-elliptic-elliptic type on $\mathbb{R}^{N}$,
$$
\begin{cases}
u_{t}=\Delta u -\chi_1\nabla( u\nabla v_1)+\chi_2 \nabla(u\nabla v_2 )+ u(a -b u), \qquad \  x\in\R^N,\ t>0,  \\
0=(\Delta- \lambda_1 I)v_1+ \mu_1 u,  \qquad \ x\in\R^N,\ t>0,  \\
0=(\Delta- \lambda_2 I)v_2+ \mu_2 u,  \qquad  \ x\in\R^N,\ t>0,  \\
u(\cdot,0)=u_{0}, \qquad x\in  \mathbb{R}^N ,
\end{cases}
$$
where $  \chi_{i}\geq 0,\ \lambda_i> 0,\ \mu_i>0$ ($i=1,2$) and  $\ a> 0,\  b> 0$ are constant real numbers, and $N$ is a positive integer. First, under some conditions on the parameters $\chi_i,\mu_i,\lambda_i, a, b$ and  $N$, we prove the global existence and boundedness of classical solutions $(u(x,t;u_0),v_1(x,t;u_0),v_2(x,t;u_0))$ for nonnegative, bounded, and uniformly continuous initial functions $u_0(x)$. Next, we explore the asymptotic stability of the constant equilibrium $(\frac{a}{b},\frac{\mu_1}{\lambda_1}\frac{a}{b},\frac{\mu_2}{\lambda_2}\frac{a}{b})$ and prove under some further assumption on the parameters that,  for every strictly  positive initial $u_0(x)$,
$$
\lim_{t\to\infty}\left[\|u(\cdot,t;u_0)-\frac{a}{b}\|_{\infty}+\|\lambda_{1}v_{1}(\cdot,t;u_0)-\frac{a}{b}\mu_1\|_{\infty}+\|\lambda_{2}v_{2}(\cdot,t;u_0)-\frac{a}{b}\mu_2\|_{\infty}\right]=0.
$$
Finally, we investigate the spreading properties of the global solutions with  compactly supported initial functions. We show that under some conditions on the parameters, there are two positive numbers $0<c^*_-(\chi_1,\mu_1,\lambda_1,\chi_2,\mu_2,\lambda_2)\le c^*_+(\chi_1,\mu_1,\lambda_1,\chi_2,\mu_2,\lambda_2)$ such that for every nonnegative initial function $u_0(x)$ with nonempty and compact support, we have
$$
\lim_{t\to\infty}\left[\sup_{|x|\leq ct}|u(x,t;u_0)-\frac{a}{b}| + \sup_{|x|\leq ct}|\lambda_1 v_1(x,t;u_0)-\frac{a}{b}\mu_1| + \sup_{|x|\leq ct}|\lambda_2 v_2(x,t;u_0)-\frac{a}{b}\mu_2|\right]=0
$$
 whenever $0\leq c< c^*_-(\chi_1,\mu_1,\lambda_1,\chi_2,\mu_2,\lambda_2)$,  and
$$
\lim_{t\to\infty}\left[\sup_{|x|\geq ct}|u(x,t;u_0)| + \sup_{|x|\geq ct}| v_1(x,t;u_0)| + \sup_{|x|\geq ct}|v_2(x,t;u_0)|\right]=0
$$
whenever $c>c^*_+(\chi_1,\mu_1,\lambda_1,\chi_2,\mu_2,\lambda_2)$. Furthermore we show that $$\lim_{(\chi_1,\chi_2)\to (0,0)}c^*_-(\chi_1,\mu_1,\lambda_1,\chi_2,\mu_2,\lambda_2)=\lim_{(\chi_1,\chi_2)\to (0,0)}c^*_+(\chi_1,\mu_1,\lambda_1,\chi_2,\mu_2,\lambda_2)=2\sqrt{a}. $$
\end{abstract}

\medskip
\noindent{\bf Key words.} Parabolic-elliptic chemotaxis system, logistic source, classical solution, local existence, global existence, asymptotic stability, spreading speeds.

\medskip
\noindent {\bf 2010 Mathematics Subject Classification.} 35B35, 35B40, 35K57, 35Q92, 92C17.

\section{Introduction and the Statement of the Main Results}
Chemotaxis describes the oriented movement of biological cells or organism in response to chemical gradients. The oriented movement of cells has a crucial role in a wide range of biological phenomena. At the beginning of 1970s, Keller and Segel  (see \cite{KeSe1}, \cite{KeSe2}) introduced systems of
partial differential equations of the following form  to model the time evolution of both the density $u(x,t)$ of a mobile species and the density $v(x,t)$ of a  chemoattractant,
\begin{equation}\label{IntroEq0}
\begin{cases}
u_{t}=\nabla\cdot (m(u)\nabla u- \chi(u,v)\nabla v) + f(u,v),\quad   x\in\Omega, \cr
\tau v_t=\Delta v + g(u,v),\quad  x\in\Omega,
\end{cases}
\end{equation}
complemented with certain boundary condition on $\partial\Omega$ if $\Omega$ is bounded, where $\Omega\subset \R^N$ is an open domain;  $\tau\ge 0$ is a non-negative constant linked to the speed of diffusion of the chemical;  the function $\chi(u,v)$ represents  the sensitivity with respect to chemotaxis; and the functions $f$ and $g$ model the growth of the mobile species and the chemoattractant, respectively.
In literature, \eqref{IntroEq0} is called the Keller-Segel (KS) model or a chemotaxis model.

Since the works by Keller and Segel,  a rich variety of mathematical models for studying chemotaxis has appeared (see \cite{BBTW}, \cite{DiNa}, \cite{DiNaRa}, \cite{GaSaTe},  \cite{HiPa1}, \cite{HiPo}, \cite{KKAS},  \cite{NAGAI_SENBA_YOSHIDA}, \cite{Sug},  \cite{SuKu},  \cite{TeWi},   \cite{WaMuZh},  \cite{win_jde}, \cite{win_JMAA_veryweak}, \cite{win_arxiv}, \cite{Win}, \cite{win_JNLS}, \cite{YoYo}, \cite{ZhMuHuTi}, and the references therein).
  The reader is referred to \cite{HiPa, Hor} for some detailed introduction into the mathematics of KS models. In the current paper,  we consider chemoattraction-repulsion process in which cells undergo random motion and chemotaxis towards attractant and away from repellent \cite{MLACLE}. Moreover, we consider the model with proliferation and death of cells and assume that chemicals diffuse very quickly. These lead to the model of partial differential equations as follows:
\begin{equation}\label{Main Intro-eq0}
\begin{cases}
u_{t}=\Delta u -\chi_1\nabla( u\nabla v_1)+\chi_2 \nabla(u\nabla v_2 )+ u(a -b u), \qquad \  x\in\Omega,\ t>0,  \\
0=(\Delta- \lambda_1 I)v_1+ \mu_1 u ,  \qquad \ x\in\Omega,\ t>0,  \\
0=(\Delta- \lambda_2 I)v_2+ \mu_2 u , \qquad \text{in}\ \ x\in\Omega,\ t>0 ,
\end{cases}
\end{equation}
complemented with certain boundary condition on $\partial\Omega$ if $\Omega$ is bounded.

When $\Omega$ is a smooth bounded domain, it is  seen that  \eqref{Main Intro-eq0}  complemented with Neumann boundary conditions
\begin{equation}\label{Main Intro-eq1}
\frac{\partial u}{\partial n}=\frac{\partial v_1}{\partial n}=\frac{\partial v_2}{\partial n}=0,
\end{equation}
has a unique nonzero constant equilibrium  $(\frac{a}{b},\frac{\mu_1}{\lambda_1}\frac{a}{b},\frac{\mu_2}{\lambda_2}\frac{a}{b})$.
The global existence of classical solutions and
the stability of the above equilibrium solution of \eqref{Main Intro-eq0}+\eqref{Main Intro-eq1} are among central dynamical issues.
 They have been studied in many papers (see \cite{EEspejoand-TSuzuki, Horstman, Jin,Lin_Mu_Gao, JLZAW, QZhanYLi, PLJSZW, MLACLE, YWang1, WangXiang, PCHu1} and the references therein). For example, in \cite{QZhanYLi}, amount others, the authors proved that

 \smallskip

\noindent $\bullet$ {\it  If $b>\chi_{1}\mu_1-\chi_2\mu_2$, or $N\leq 2$, or $\frac{N-2}{N}(\chi_1\mu_1-\chi_2\mu_2)<b$ and $N\geq 3$,
then for every nonnegative initial $u_0\in C^{0}(\overline{\Omega}),$ \eqref{Main Intro-eq0}+\eqref{Main Intro-eq1} has a unique global classical solution $(u(\cdot,\cdot),v_1(\cdot,\cdot),v_2(\cdot,\cdot))$ which is uniformly bounded.
}
\smallskip

\noindent $\bullet$ {\it If $a=b>2\chi_1\mu_1$, then for every nonnegative initial $u_0\in C^{0}(\overline{\Omega}),$ $u_0\neq 0$, the global classical solution $(u(\cdot,\cdot),v_1(\cdot,\cdot),v_2(\cdot,\cdot))$ of \eqref{Main Intro-eq0}+\eqref{Main Intro-eq1} satisfies $$\lim_{t\to\infty}\Big[\|u(\cdot,t)-1\|_{C^{0}(\Omega)}+\|v_1(\cdot,t)-\frac{\mu_1}{\lambda_1}\|_{C^{0}(\Omega)}+\|v_2(\cdot,t)-\frac{\mu_2}{\lambda_2}\|_{C^{0}(\Omega)} \Big] =0.$$
}

  While  attraction-repulsion chemotaxis systems on bounded domains have been studied in many papers,
 there is little study of such systems on unbounded domains.
The objective of this paper is  to study the dynamics of \eqref{Main Intro-eq0} with $\Omega=\R^N$,
that is,
\begin{equation}\label{Main Intro-eq}
\begin{cases}
u_{t}=\Delta u -\chi_1\nabla( u\nabla v_1)+\chi_2 \nabla(u\nabla v_2 )+ u(a -b u) , \qquad \  x\in\R^N,\ t>0,  \\
0=(\Delta- \lambda_1 I)v_1+ \mu_1 u  , \qquad \ x\in\R^N,\ t>0,  \\
0=(\Delta- \lambda_2 I)v_2+ \mu_2 u  , \qquad \ x\in\R^N,\ t>0,  \\
u(\cdot,0)=u_{0} , \qquad x\in  \mathbb{R}^N .
\end{cases}
\end{equation}
 In the case that the  chemorepellent is absent, that is, $\chi_2=0$,
 the authors of the current paper studied in \cite{SaSh1} the global existence of classical solutions and asymptotic behavior of bounded
 global classical solutions of \eqref{Main Intro-eq}. In the current paper, we  investigate the global existence of classical solutions, stability of constant equilibria, and spreading speeds of \eqref{Main Intro-eq} when both  chemoattractant and  chemorepellent are present.
 More precisely, we  identify the circumstances under which  positive classical solutions of \eqref{Main Intro-eq} with nonnegative, bounded,  and uniformly continuous initial functions  exist globally;  investigate the asymptotic stability of the nonzero constant equilibrium  $(\frac{a}{b},\frac{\mu_1}{\lambda_1}\frac{a}{b},\frac{\mu_2}{\lambda_2}\frac{a}{b})$;  and explore the
 spreading properties of the global solutions with  compactly supported initial functions.
   We pay special attention to the combined effect of the chemoattractant and  chemorepellent on the above dynamical issues.

Note that, due to biological interpretations, only nonnegative initial functions will be of interest. We call $(u(x,t),v_1(x,t),v_2(x,t))$ a {\it  classical solution} of \eqref{Main Intro-eq} on $ [0,T)$ if  $u,v_1,v_2\in C(\R^N\times [0,T))\cap  C^{2,1}(\R^N\times (0,T))$ and satisfies \eqref{Main Intro-eq} for $(x,t)\in\R^N\times (0,T)$ in the classical sense. A classical solution $(u(x,t),v_1(x,t),v_2(x,t))$ of \eqref{Main Intro-eq} on $ [0,T)$ is called {\it nonnegative} if $u(x,t)\ge 0$, $v_1(x,t)\ge 0$ and $v_2(x,t)\geq 0$ for all $(x,t)\in\R^N\times [0,T)$.  A {\it global classical solution}   of \eqref{Main Intro-eq} is a classical solution  on $ [0,\infty)$.

Let
\begin{equation}
\label{unif-cont-space}
C_{\rm unif}^b(\R^N)=\{u\in C(\R^N)\,|\, u(x)\,\,\text{is uniformly continuous in}\,\, x\in\R^N\,\, {\rm and}\,\, \sup_{x\in\R^N}|u(x)|<\infty\}
\vspace{-0.1in}\end{equation}
equipped with the norm $\|u\|_\infty=\sup_{x\in\R^N}|u(x)|$.
We have the following result on the global existence of classical solutions of \eqref{Main Intro-eq} for initial functions  belonging to $C^{b}_{\rm unif}(\R^N)$.

\medskip

\noindent {\bf Theorem A.}
{\it Suppose that
 \begin{equation}\label{gl-exist-thm-eq2'}
\chi_1=a=b=0
\end{equation}
or
\begin{eqnarray}\label{gl-exist-thm-eq2}
b  > \chi_1\mu_1-\chi_2\mu_2 + M,
\end{eqnarray}
where
\begin{align}\label{gl-exist-thm-eq1}
M:= \min\Big\{ &\frac{1}{\lambda_2}\big( (\chi_2\mu_2\lambda_2-\chi_1\mu_1\lambda_1)_+ + \chi_1\mu_1(\lambda_1-\lambda_2)_+ \big),\nonumber\\
& \qquad \frac{1}{\lambda_1}\big( (\chi_2\mu_2\lambda_2-\chi_1\mu_1\lambda_1)_+ + \chi_2\mu_2(\lambda_1-\lambda_2)_+ \big) \Big\}.
 \end{align}
Then for every nonnegative initial function $u_{0}\in C^{b}_{\rm unif}(\R^N)$, \eqref{Main Intro-eq} has a unique nonnegative global classical solution $(u(\cdot,\cdot;u_0),v_1(\cdot,\cdot;u_0),v_2(\cdot,\cdot;u_0))$ with $u(\cdot,0;u_0) =u_0$. Furthermore, it holds that
\begin{equation}
\|u(\cdot,t;u_0)\|_{\infty}\leq\begin{cases}\|u_0\|_{\infty}\hspace{4.2 cm}\text{if \eqref{gl-exist-thm-eq2'} holds}\\
\max\{\|u_0\|_{\infty}, \frac{a}{b+\chi_2\mu_2-\chi_1\mu_1-M}\} \quad \text{if \eqref{gl-exist-thm-eq2} holds}.
\end{cases}
\end{equation}
}

\begin{rk}\label{remark-on-glo-exist}   $M\leq \chi_2\mu_2$.  \eqref{gl-exist-thm-eq2'} and \eqref{gl-exist-thm-eq2}
provide explicit conditions for the global existence of classical solutions. The following special and important conditions follow from \eqref{gl-exist-thm-eq2}.
\begin{description}
\item[(i)] If $b>\chi_1\mu_1$, \eqref{Main Intro-eq} always has global bounded classical solution for any initial $u_0\in C^{b}_{\rm unif}(\R^N)$ with  $u_0\geq 0$.

\item[(ii)] If $\lambda_1\leq \lambda_2$ and $ \chi_2\mu_2\lambda_2\geq \chi_1\mu_1\lambda_1$, we have that $M=\chi_2\mu_2-\frac{\lambda_1}{\lambda_2}\chi_1\mu_1$. In this case, it follows from Theorem A that for every nonnegative bounded and uniformly continuous initial data $u_0$, \eqref{Main Intro-eq} has a unique bounded global classical solution $(u(\cdot,\cdot;u_0),v_1(\cdot,\cdot;u_0),v_2(\cdot,\cdot;u_0))$, whenever $b>\chi_1\mu_1(1-\frac{\lambda_1}{\lambda_2})$. Thus, in the absence of chemoattractant, i.e $\chi_1=0$, for every nonnegative bounded and uniformly continuous initial data $u_0$, \eqref{Main Intro-eq} has a unique bounded global classical solution $(u(\cdot,\cdot;u_0),v_1(\cdot,\cdot;u_0),v_2(\cdot,\cdot;u_0))$, whenever $b>0$.

\item[(iii)]If $\lambda_1\leq \lambda_2$ and $ \chi_2\mu_2\lambda_2\leq \chi_1\mu_1\lambda_1$, we have that $M=0$. In this case, it follows from Theorem A that for every nonnegative bounded and uniformly continuous initial data $u_0$, \eqref{Main Intro-eq} has a unique bounded global classical solution $(u(\cdot,\cdot;u_0),v_1(\cdot,\cdot;u_0),v_2(\cdot,\cdot;u_0))$, whenever $b>\chi_1\mu_1-\chi_2\mu_2$.

\item[(iv)] We note that if $\lambda_1\geq \lambda_2$ and $ \chi_2\mu_2\lambda_2\ge\chi_1\mu_1\lambda_1 $, then  $M= \chi_2\mu_2-\chi_1\mu_1$. Thus, if $\lambda_1\geq \lambda_2$ and $ \chi_2\mu_2\lambda_2\geq\chi_1\mu_1\lambda_1 $, it follows from Theorem A that for every $b>0$ and for every nonnegative bounded and uniformly continuous initial data $u_0$, \eqref{Main Intro-eq} has a unique bounded global classical solution $(u(\cdot,\cdot;u_0),v_1(\cdot,\cdot;u_0),v_2(\cdot,\cdot;u_0))$.

\item[(v)] If $\lambda_1\geq \lambda_2$ and $ \chi_2\mu_2\lambda_2\leq \chi_1\mu_1\lambda_1$, we have that $M=\frac{(\lambda_1-\lambda_2)\chi_2\mu_2}{\lambda_1}$. In this case, it follows from Theorem A that for every nonnegative bounded and uniformly continuous initial data $u_0$, \eqref{Main Intro-eq} has a unique bounded global classical solution $(u(\cdot,\cdot;u_0),v_1(\cdot,\cdot;u_0),v_2(\cdot,\cdot;u_0))$, whenever $b>\chi_1\mu_1-\frac{\lambda_2}{\lambda_1}\chi_2\mu_2$.
\end{description}
\end{rk}

 \medskip

 It follows from Remark \ref{remark-on-glo-exist} (iii)\&(v), that when $\chi_2=0$, we recover as a special case Theorem 1.5 in \cite{SaSh1} for the case $b>\chi_1$ and $\mu_1=1$. When \eqref{gl-exist-thm-eq2} does not hold, we leave it open whether for any  nonnegative initial function $u_{0}\in C^{b}_{\rm unif}(\R^N)$ global solution to \eqref{Main Intro-eq} exists.

  Theorem A is fundamental. Assume the conditions in Theorem A. Then \eqref{Main Intro-eq} generates a dynamical system on the infinite dimensional
 space $X^+=\{u\in C_{\rm unif}^b(\R^N)\,|\, u\ge 0\}$. Methods and theorems for general  infinite dimensional dynamical systems in literature
 (e.g. \cite{Hal}, \cite{SeYu}) may then be utilized for the further study of many important dynamical aspects, including the long time behavior of bounded solutions,  stability of certain special solutions, existence of global attractor, etc. In the following, we explore the stability of the nonzero constant equilibrium  $(\frac{a}{b},\frac{\mu_1}{\lambda_1}\frac{a}{b},\frac{\mu_2}{\lambda_2}\frac{a}{b})$.

 We first study the stability of   $(\frac{a}{b},\frac{\mu_1}{\lambda_1}\frac{a}{b},\frac{\mu_2}{\lambda_2}\frac{a}{b})$ with respect to strictly positive initial functions. From now on, we shall always suppose that $a>0$, unless otherwise specified. We prove

\medskip

\noindent {\bf Theorem B.}
{\it Suppose that
\begin{equation}\label{main-asym-eq}
b>\chi_1\mu_1-\chi_2\mu_2+K,
\end{equation}
where
\begin{align}\label{N-eq}
K:=\min\Big\{&\frac{1}{\lambda_2}\Big(|\chi_1\mu_1\lambda_1-\chi_2\mu_2\lambda_2|+\chi_1\mu_1|\lambda_1-\lambda_2|\Big),\nonumber\\
&\quad  \frac{1}{\lambda_1}\Big(|\chi_1\mu_1\lambda_1-\chi_2\mu_2\lambda_2|+\chi_2\mu_2|\lambda_1-\lambda_2|\Big) \Big\}.
\end{align}
Then for every initial function $u_{0}\in C^{b}_{\rm unif}(\R^N)$ with $\inf_{x\in\R^N}u_0(x)>0$, \eqref{Main Intro-eq} has a unique bounded global classical solution $(u(\cdot,\cdot;u_0),v_1(\cdot,\cdot;u_0),v_2(\cdot,\cdot;u_0))$ with $u(\cdot,0;u_0)=u_{0}$. Furthermore we have that
\begin{equation}
\lim_{t\to\infty}\|u(\cdot,t;u_0)-\frac{a}{b}\|_{\infty}=0
\end{equation}
and
\begin{equation}
\lim_{t\to\infty}\|\lambda_iv_i(\cdot,t;u_0)-\frac{a}{b}\mu_i\|_{\infty}=0 , \ \forall\ i=1,2.
\end{equation}

}

\medskip

\begin{rk}
\begin{itemize}
\item[(1)]  \eqref{main-asym-eq} provides explicit conditions for the global stability of the constant equilibrium  $(\frac{a}{b},\frac{\mu_1}{\lambda_1}\frac{a}{b},\frac{\mu_2}{\lambda_2}\frac{a}{b})$ with respect to strictly positive initial functions.
We point out the  following special and important equivalent  conditions of \eqref{main-asym-eq}.
\begin{itemize}
\item[(i)] If $\lambda_{1}\leq \lambda_2$, and   $\chi_2\mu_2\lambda_2\geq \chi_1\mu_1\lambda_1$, then \eqref{main-asym-eq} holds if and only if $ b>2 \chi_1\mu_1-2\frac{\lambda_1}{\lambda_2}\chi_1\mu_1$.

\item[(ii)] If $\lambda_{1}\leq \lambda_2$, and  $\chi_2\mu_2\lambda_2\leq \chi_1\mu_1\lambda_1,$   then \eqref{main-asym-eq} holds if and only if $b>2\chi_1\mu_1-2\chi_2\mu_2)$.

\item[(iii)] If $\lambda_{1}\geq \lambda_2$, and   $\chi_2\mu_2\lambda_2\geq \chi_1\mu_1\lambda_1$, then \eqref{main-asym-eq} holds if and only if $b>0$.

\item[(iv)] If $\lambda_{1}\geq \lambda_2$, and $\chi_1\mu_1\lambda_1\geq \chi_2\mu_2\lambda_2$, then \eqref{main-asym-eq} holds if and only if $
b>2 \chi_1\mu_1-2\frac{\lambda_2}{\lambda_1}\chi_2\mu_2$.

\end{itemize}
 \item[(2)]  By (i)-(iv), if $b>2\chi_1\mu_1$, then \eqref{main-asym-eq} holds. Hence the hypothesis \eqref{main-asym-eq} is weaker than the known result on bounded domain.

 \item[(3)] If $\chi_2=0$, then (ii) and  (iv) extend \cite[Theorem 1.7]{SaSh1}.

 \item[(4)] By (i) and (iii), if $\chi_1=0$,  then the constant solution $\frac{a}{b}$ is stable with respect to strictly positive perturbation whenever $b>0$.

 \item[(5)] It is interesting to know whether hypothesis \eqref{gl-exist-thm-eq2} is enough to have the stability of the constant steady solution $(\frac{a}{b},\frac{a\mu_1}{b\lambda_1},\frac{a\mu_2}{b\lambda_2})$ with respect to strictly positive perturbation. We plan to study this question in our future work.
 \end{itemize}
\end{rk}

\medskip

 Next, we study the attraction   of  $(\frac{a}{b},\frac{\mu_1}{\lambda_1}\frac{a}{b},\frac{\mu_2}{\lambda_2}\frac{a}{b})$ with respect to global classical solutions of \eqref{Main Intro-eq} with compactly supported initial
functions, or equivalently, the spreading properties of  global classical solutions of \eqref{Main Intro-eq} with compactly supported initial
functions.  {For $x=(x_1,x_2,\cdots,x_N)\in\mathbb{R}^N$, let $|x|=\big(\sum_{i=1}^N x_i^2\big)^{\frac{1}{2}}$}. We obtain the following main results.

\medskip

\noindent{\bf Theorem C.}
{\it Suppose that \eqref{gl-exist-thm-eq2} holds and define
\begin{equation}\label{asym-hyp1}
D:=\min\Big\{\frac{|\chi_2\mu_2-\chi_1\mu_1|}{2\sqrt{\lambda_2}}+\frac{\chi_1\mu_1|\sqrt{\lambda_1}-\sqrt{\lambda_2}|}{2\sqrt{\lambda_1\lambda_2}}, \frac{|\chi_1\mu_1-\chi_2\mu_2|}{2\sqrt{\lambda_1}}+\frac{\chi_2\mu_2|\sqrt{\lambda_2}-\sqrt{\lambda_1}|}{2\sqrt{\lambda_1\lambda_2}}\Big\}.
\end{equation}
Then  for every $u_{0}\in C^{b}_{\rm unif}(\R^N)$ with $u_0\geq 0$ and $supp(u_0)$ being compact and non-empty, we have that
\begin{equation}\label{asym-main-eq1}
\lim_{t\to\infty}\left[\sup_{|x|\geq ct}|u(x,t;u_0)| + \sup_{|x|\geq ct}| v_1(x,t;u_0)| + \sup_{|x|\geq ct}|v_2(x,t;u_0)|\right]=0
\end{equation}
for every $c> c_+^{*}(\chi_1,\mu_1,\lambda_1,\chi_2,\mu_2,\lambda_2)$, where
\begin{equation}
c_{+}^{*}(\chi_1,\mu_1,\lambda_1,\chi_2,\mu_2,\lambda_2)=2\sqrt{a}+\frac{\sqrt{a}(D\sqrt{Na}+\chi_2\mu_2)}{b+\chi_2\mu_2-\chi_1\mu_1-M},
\end{equation}
and $M$ is given by \eqref{gl-exist-thm-eq1}.
}

\medskip

\begin{rk}
\begin{description}
\item[(i)]If $\lambda_1\leq \lambda_2$ and $\chi_2\mu_2\lambda_2\geq \chi_1\mu_1\lambda_1$, then $$
c_{+}^{*}(\chi_1,\mu_1,\lambda_1,\chi_2,\mu_2,\lambda_2)=2\sqrt{a}+\frac{\sqrt{a}(D\sqrt{Na}+\chi_2\mu_2)}{b-(1-\frac{\lambda_1}{\lambda_2})\chi_1\mu_1}.
$$
\item[(ii)] If $\lambda_1\leq \lambda_2$ and $\chi_2\mu_2\lambda_2\leq \chi_1\mu_1\lambda_1$, then $$
c^{*}_+(\chi_1,\mu_1,\lambda_1,\chi_1,\mu_2,\lambda_2)=2\sqrt{a}+\frac{\sqrt{a}(D\sqrt{Na}+\chi_2\mu_2)}{b+\chi_2\mu_2-\chi_1\mu_1}.
 $$
\item[(iii)] If $\lambda_1\geq \lambda_2 $ and $\chi_{2}\mu_2\lambda_2\geq \chi_1\mu_1\lambda_1$ then
$$
c^{*}_+(\chi_1,\mu_1,\lambda_1,\chi_1,\mu_2,\lambda_2)=2\sqrt{a}+\frac{\sqrt{a}(D\sqrt{Na}+\chi_2\mu_2)}{b}.
$$
\item[(iv)] If $\lambda_1\geq \lambda_2 $ and $\chi_{2}\mu_2\lambda_2\leq \chi_1\mu_1\lambda_1$, then
$$
c^{*}_+(\chi_1,\mu_1,\lambda_1,\chi_1,\mu_2,\lambda_2)=2\sqrt{a}+\frac{\sqrt{a}(D\sqrt{Na}+\chi_2\mu_2)}{b-\frac{1}{\lambda_1}(\chi_1\mu_1\lambda_1-\chi_2\mu_2\lambda_2) }.$$
\item[(v)] Note that  $\chi_2=0$ implies that $D=\frac{\chi_1\mu_1}{2\sqrt{\lambda_1}}$ and $M=0$. Hence if $\chi_2=0$, it follows from Theorem C that $c^{*}_{+}(\chi_1,\mu_1,\lambda_1,0,\mu_2,\lambda_2)=2\sqrt{a}+\frac{a\chi_1\mu_1\sqrt{N}}{2(b-\chi_1\mu_1)\sqrt{\lambda_1}}$. Thus, in the case $\chi_2=0$, and $\mu_1=\lambda_1=1$, we obtain a better estimate for $c^{*}_{+}(\chi_1,\mu_1,\lambda_1,0,\mu_2,\lambda_2)$ compare to the one giving by \cite[Remark 1.2(iii)]{SaSh2}.
\end{description}
\end{rk}

\medskip

\noindent {\bf Theorem D.}
{\it
Suppose that \eqref{main-asym-eq} holds and
\begin{align}\label{asym-hyp2}
4a(1-L)-\frac{Na^2D^2}{(b+\chi_2\mu_2-\chi_1\mu_1-M)^2}>0,
\end{align}
where $M$ is given by \eqref{gl-exist-thm-eq1} and
 \begin{align}\label{L-eq}
L:=\min\Big\{&\frac{(\chi_2\mu_2\lambda_2-\chi_1\mu_1\lambda_1)_{-}+\chi_1\mu_1(\lambda_1-\lambda_2)_{-}}{\lambda_2(b+\chi_2\mu_2-\chi_1\mu_1-M)},
\nonumber\\
&\quad  \frac{(\chi_2\mu_2\lambda_2-\chi_1\mu_1\lambda_1)_{-}+\chi_2\mu_2(\lambda_1-\lambda_2)_{-}}{\lambda_1(b+\chi_2\mu_2-\chi_1\mu_1-M)}  \Big\}.
\end{align}
Then for every $u_{0}\in C^{b}_{\rm unif}(\R^N)$ with  $u_0\geq 0$ and $supp(u_0)$ being  non-empty, we have that
\begin{equation}\label{asym-main-eq2}
\lim_{t\to\infty}\left[\sup_{|x|\leq ct}|u(x,t;u_0)-\frac{a}{b}| + \sup_{|x|\leq ct}|\lambda_1 v_1(x,t;u_0)-\frac{a}{b}\mu_1| + \sup_{|x|\leq ct}|\lambda_2 v_2(x,t;u_0)-\frac{a}{b}\mu_2|\right]=0
\end{equation}
for every $0\leq c< c_{-}^{*}(\chi_1,\mu_1,\lambda_1,\chi_2,\mu_2,\lambda_2)$, where
 $$
c^*_{-}(\chi_1,\mu_1,\lambda_1,\chi_2,\mu_2,\lambda_2)=2\sqrt{a(1-L)}-\frac{aD\sqrt{N}}{b+\chi_2\mu_2-\chi_1\mu_1-M}.
 $$
}

\medskip

\begin{rk}
\begin{description}
\item[(i)] If $\lambda_1\leq \lambda_2$ and $\chi_2\mu_2\lambda_2\geq\chi_1\mu_1\lambda_1$, then $L=\frac{\chi_1\mu_1(1-\frac{\lambda_1}{\lambda_2})}{b-\chi_1\mu_1(1-\frac{\lambda_1}{\lambda_2})}$ and
$$c^{*}_{-}(\chi_1,\mu_1,\lambda_1,\chi_2,\mu_2,\lambda_2)= 2\sqrt{\frac{a(b-2\chi_1\mu_1(1-\frac{\lambda_1}{\lambda_2}))}{b-\chi_1\mu_1(1-\frac{\lambda_1}{\lambda_2})}} -\frac{aD\sqrt{N}}{b-\chi_1\mu_1(1-\frac{\lambda_1}{\lambda_2})}.$$

\item[(ii)]If $\lambda_1\leq \lambda_2$ and $\chi_2\mu_2\lambda_2\leq\chi_1\mu_1\lambda_1$, then $L=\frac{\chi_1\mu_1-\chi_2\mu_2}{b+\chi_2\mu_2-\chi_1\mu_1}$ and
$$c^{*}_{-}(\chi_1,\mu_1,\lambda_1,\chi_2,\mu_2,\lambda_2)=2\sqrt{\frac{a(b-2(\chi_1\mu_1-\chi_2\mu_2))}{{b+\chi_2\mu_2-\chi_1\mu_1}}} -\frac{aD\sqrt{N}}{{b+\chi_2\mu_2-\chi_1\mu_1}}.$$
\item[(iii)]
If $\lambda_1\geq \lambda_2$ and $\chi_2\mu_2\lambda_2\geq\chi_1\mu_1\lambda_1$, then $L=0$ and
$$c^{*}_{-}(\chi_1,\mu_1,\lambda_1,\chi_2,\mu_2,\lambda_2)= 2\sqrt{a} -\frac{aD\sqrt{N}}{b}.$$

\item[(iv)]If $\lambda_1\geq \lambda_2$ and $\chi_2\mu_2\lambda_2\leq\chi_1\mu_1\lambda_1$, then $L=\frac{\chi_1\mu_1\lambda_1-\chi_2\mu_2\lambda_2}{\lambda_1(b+\chi_2\mu_2\lambda_2-\chi_1\mu_1\lambda_1)}$ and
$$c^{*}_{-}(\chi_1,\mu_1,\lambda_1,\chi_2,\mu_2,\lambda_2)= 2\sqrt{\frac{a(b-\frac{2}{\lambda_1}(\chi_1\mu_1\lambda_1-\chi_2\mu_2\lambda_2))}{{b-\frac{1}{\lambda_1}(\chi_1\mu_1\lambda_1-\chi_2\mu_2\lambda_2)}}} -\frac{aD\sqrt{N}}{{b-\frac{1}{\lambda_1}(\chi_1\mu_1\lambda_1-\chi_2\mu_2\lambda_2)}}.$$

\item[(v)] If $\chi_2=0$, by $(ii)$ and $(iv)$, we have  that $c^{*}_{-}(\chi_1,\mu_1,\lambda_1,\chi_2,\mu_2,\lambda_2)= 2\sqrt{\frac{a(b-2\chi_1\mu_1)}{b-\chi_1\mu_1}}-\frac{a\chi_1\mu_1\sqrt{N}}{2(b-\chi_1\mu_1)\sqrt{\lambda_1}}.$ Hence in the case $\chi_2=0,$ $\mu_1=\lambda_1=1$, we obtain a better estimate on $c^{*}_{-}(\chi_1,\mu_1,\lambda_1,\chi_2,\mu_2,\lambda_2)$ than the ones obtained in \cite{SaSh2} and \cite{SaSh1}.
\end{description}
\end{rk}

Observe that, if either $\chi_1=\chi_2=0$ or $\chi_1-\chi_2=\mu_1-\mu_2=\lambda_1-\lambda_2=0$, the first equation in \eqref{Main Intro-eq} becomes the following  scalar reaction diffusion equation,
\begin{equation}
\label{fisher-eq}
u_{t}=\Delta u + u(a-bu),\quad  x\in\R^N,\,\, t>0,
\end{equation}
which is referred to as Fisher or KPP equations due to  the pioneering works by Fisher (\cite{Fis}) and Kolmogorov, Petrowsky, Piscunov
(\cite{KPP}) on the spreading properties of \eqref{fisher-eq}.
It follows from the works \cite{Fis}, \cite{KPP}, and \cite{Wei1}  that $c^*_{-}$ and $c^*_{+}$  in Theorem C and Theorem D,  respectively, can be chosen so that $c^{\ast}_{-}=c^{\ast}_{+}=2\sqrt a$
 ($c^*:=2\sqrt a$ is called the {\it spatial spreading speed} of \eqref{fisher-eq} in literature), and that \eqref{fisher-eq} has traveling wave solutions $u(t,x)=\phi(x-ct)$  connecting $\frac{a}{b}$ and $0$ (i.e.
$(\phi(-\infty)=\frac{a}{b},\phi(\infty)=0)$) for all speeds $c\geq c^*$ and has no such traveling wave
solutions of slower speed.
 Since the pioneering works by  Fisher \cite{Fis} and Kolmogorov, Petrowsky,
Piscunov \cite{KPP},  a huge amount research has been carried out toward the spreading properties of
  reaction diffusion equations of the form,
\begin{equation}
\label{general-fisher-eq}
u_t=\Delta u+u f(t,x,u),\quad x\in\R^N,
\end{equation}
where $f(t,x,u)<0$ for $u\gg 1$,  $\p_u f(t,x,u)<0$ for $u\ge 0$ (see \cite{Berestycki1, BeHaNa1, BeHaNa2, Henri1, Fre, FrGa, LiZh, LiZh1, Nad, NoRuXi, NoXi1, She1, She2, Wei1, Wei2, Zla}, etc.).

\begin{rk}
\begin{description}
\item[(i)]{ It is clear from Theorem C and Theorem D that
$$\lim_{(\chi_1,\chi_2)\to (0,0)}c^*_-(\chi_1,\mu_1,\lambda_1,\chi_2,\mu_2,\lambda_2)=\lim_{(\chi_1,\chi_2)\to (0,0)}c^*_+(\chi_1,\mu_1,\lambda_1,\chi_2,\mu_2,\lambda_2)=2\sqrt{a}$$ and
\begin{align*}
 &\lim_{(\delta_1,\delta_2,\delta_3)\to (0,0,0)}c^*_-(\chi+\delta_1,\mu+\delta_2,\lambda+\delta_3,\chi,\mu,\lambda)\nonumber\\
 &=\lim_{(\delta_1,\delta_2,\delta_3)\to (0,0,0)}c^*_+(\chi+\delta_1,\mu+\delta_2,\lambda+\delta_3,\chi,\mu,\lambda)\nonumber\\
  &\lim_{(\delta_1,\delta_2,\delta_3)\to (0,0,0)}c^*_-(\chi,\mu,\lambda,\chi+\delta_1,\mu+\delta_2,\lambda+\delta_3)\nonumber\\
 &=\lim_{(\delta_1,\delta_2,\delta_3)\to (0,0,0)}c^*_+(\chi,\mu,\lambda,\chi+\delta_1,\mu+\delta_2,\lambda+\delta_3)\nonumber\\
& =2\sqrt{a}, \quad \forall  \ \chi>0,\mu>0\  \text{and} \ \lambda>0.
 \end{align*}

 Hence we recover the know results in the literature when $\chi_1=\chi_2=0$ or $\chi_1-\chi_2=\mu_1-\mu_2=\lambda_1-\lambda_2=0$.
}
\item[(ii)] For every $\chi_i\geq 0,\ \mu_i>0,\ \lambda_i>0$, let
$$c^*_{up}(\chi_1,\mu_1,\lambda_1,\chi_2,\mu_2,\lambda_2)=\inf\{ c^*>0\ | \  \eqref{asym-main-eq1} \ \text{holds}\} $$
 and
 $$c^*_{low}(\chi_1,\mu_1,\lambda_1,\chi_2,\mu_2,\lambda_2)=\sup\{ c^*\geq 0 \ | \  \eqref{asym-main-eq2}\ \text{holds}\}.
  $$
  $[c^*_{low}(\chi_1,\mu_1,\lambda_1,\chi_2,\mu_2,\lambda_2),c^*_{up}(\chi_1,\mu_1,\lambda_1,\chi_2,\mu_2,\lambda_2)]$ is called {\rm the spreading speed interval} of \eqref{Main Intro-eq}.
  Theorem C implies that if   \eqref{gl-exist-thm-eq2} holds, then
  $$c^*_{up}(\chi_1,\mu_1,\lambda_1,\chi_2,\mu_2,\lambda_2)\leq  c^*_{+}(\chi_1,\mu_1,\lambda_1,\chi_2,\mu_2,\lambda_2)<\infty.
   $$
   Under the hypotheses of Theorem D, we have that
   $$c^*_{low}(\chi_1,\mu_1,\lambda_1,\chi_2,\mu_2,\lambda_2)\geq c^*_{-}(\chi_1,\mu_1,\lambda_1,\chi_2,\mu_2,\lambda_2)>0.
    $$
    It is interesting  to know the relationship between $c^*_{up}(\chi_1,\mu_1,\lambda_1,\chi_2,\mu_2,\lambda_2)$ and $2\sqrt{a}$ as well as the relationship between $c^*_{low}(\chi_1,\mu_1,\lambda_1,\chi_2,\mu_2,\lambda_2)$ and $2\sqrt{a}$. It is also interesting
      to know whether $c^*_{low}(\chi_1,\mu_1,\lambda_1,\chi_2,\mu_2,\lambda_2)=c^*_{up}(\chi_1,\mu_1,\lambda_1,\chi_2,\mu_2,\lambda_2)$.
       We plan to study these questions in our future works.

\item[(iii)] When $\chi_2=0$, $\lambda_1=\mu_1=1$,  and $0< \chi_1<\frac{b}{2}$, in a very recent work  \cite{SaSh2} it was shown  that there is a positive constant $c^{*}(\chi_1)\geq 2\sqrt{a}$ such that for every $c\geq c^{*}(\chi_1)$ and $\xi\in S^{N-1}$,
    \eqref{Main Intro-eq} has a traveling wave solution $(u(x,t),v(x,t))=(u(x\cdot\xi-ct),v(x\cdot\xi-ct))$ connecting the trivial solutions  $(\frac{a}{b},\frac{a}{b})$ and $(0,0)$ and propagating in the direction of $\xi$ with speed $c$, and  no such traveling wave solution exists for speed less than $2\sqrt{a}$. We plan to study these questions for \eqref{Main Intro-eq} when both $\chi_1>0$ and $\chi_2>0$.
\end{description}
\end{rk}

\medskip

    We end up the introduction with the following remarks.
    First, our study is based  on many techniques developed in \cite{SaSh1}. But, to apply these techniques to \eqref{Main Intro-eq}
   with non-zeros $\chi_1$ and $\chi_2$,
    nontrivial modifications  are
   needed and made in the current paper. The modified techniques
   would be useful for the further study of attraction-repulsion chemotaxis systems. Second, most results obtained in \cite{SaSh1} for the
   special case $\chi_2=0$ are recovered and extended further in the current paper. Third, conditions explicitly depending on the sensitivity parameters $\chi_1$ and $\chi_2$ of the chemoattractant and  chemorepellent are provided in the current paper for the global existence of classical solutions of \eqref{Main Intro-eq} and stability of the nonzero constant equilibrium $(\frac{a}{b},\frac{\mu_1}{\lambda_1}\frac{a}{b},\frac{\mu_2}{\lambda_2}\frac{a}{b})$, and lower and upper bounds explicitly depending
     on $\chi_1$ and $\chi_2$ are established for the spreading speeds of positive solutions with compactly supported initial distributions.
   These conditions and lower and upper bounds would be of great practical importance.

 The rest of the paper is organized as follows. Section 2 is devoted to the study of global existence of classical solutions. It is here that we prove Theorem A. In section 3,  we study the asymptotic stability of the constant equilibrium
$(\frac{a}{b},\frac{\mu_1}{\lambda_1}\frac{a}{b},\frac{\mu_2}{\lambda_2}\frac{a}{b})$ and prove Theorem B.
We study the spreading properties of global classical solutions of \eqref{Main Intro-eq} with compactly supported initial
functions and prove Theorems C and D in section 4.

\medskip


\section{Global Existence}
In this section, we discuss  the existence of global/bounded  classical solutions and prove Theorem A. We start with the following result which guarantees the existence of a unique local in time classical solution of \eqref{Main Intro-eq} for any nonnegative bounded and uniformly continuous initial data.

\begin{lem}\label{Local Existence}
For any $u_0 \in C_{\rm unif}^{b}(\R^N)$ with  $u_0 \geq 0$,  there exists $T_{max} \in (0,\infty]$  such that \eqref{Main Intro-eq}  has a unique non-negative classical solution $(u(x,t;u_0),v_1(x,t;u_0), v_2(x,t,u_0))$ { on $[0,T_{\max})$} with $\lim_{t\to 0}u(\cdot,t;u_0)=u_0$ in $C_{\rm uinf}^b(\R^N)$-norm. Moreover, if $T_{max}< \infty,$ then
\begin{equation}\label{Eq_Local Existence}
\limsup_{t \to T_{max}}\| u(\cdot,t;u_0)\|_{\infty}=\infty.
\end{equation}
\end{lem}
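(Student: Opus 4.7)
\medskip
\noindent \textbf{Proof plan for Lemma \ref{Local Existence}.}

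The plan is to eliminate $v_1,v_2$ in favor of $u$, rewrite the first equation as an abstract semilinear Cauchy problem on $X:=C^{b}_{\rm unif}(\R^N)$, and apply the Banach fixed-point theorem to its mild formulation. For $w\in X$, let $v_i[w]:=\mu_i(\lambda_i I-\Delta)^{-1}w$, which is given by convolution against the Bessel kernel $G_{\lambda_i}$ on $\R^N$. I would first record three estimates: (a) $v_i[w]\ge 0$ whenever $w\ge 0$, since $G_{\lambda_i}\ge 0$; (b) $\|v_i[w]\|_\infty\le(\mu_i/\lambda_i)\|w\|_\infty$ by the maximum principle; and (c) $\|\nabla v_i[w]\|_\infty\le K_i\|w\|_\infty$ with $K_i=\mu_i\|\nabla G_{\lambda_i}\|_{L^1(\R^N)}$. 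The crucial fact behind (c) is that although $\nabla G_{\lambda_i}$ has a singularity of order $|x|^{1-N}$ at the origin, the volume element $r^{N-1}\,dr$ absorbs it, and the exponential decay of the Bessel kernel at infinity gives $\nabla G_{\lambda_i}\in L^{1}(\R^N)$ in every dimension.

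Next I would reformulate the first equation using the heat semigroup $T(t)=e^{t\Delta}$ on $X$, which satisfies $\|T(t)f\|_\infty\le\|f\|_\infty$ and the smoothing bound $\|T(t)\nabla\!\cdot\! F\|_\infty\le C_0 t^{-1/2}\|F\|_\infty$ for bounded vector fields $F$. The mild formulation reads
\begin{equation*}
u(t)=T(t)u_0+\int_0^t T(t-s)\bigl[-\nabla\!\cdot\!\bigl(u(s)(\chi_1\nabla v_1[u(s)]-\chi_2\nabla v_2[u(s)])\bigr)+u(s)(a-bu(s))\bigr]\,ds.
\end{equation*}
On the closed ball $B_R=\{u\in C([0,T];X):\sup_{t\in[0,T]}\|u(t)\|_\infty\le R\}$ with $R>\|u_0\|_\infty$ fixed, I would combine (c) with the smoothing estimate to bound the divergence term by $C_0(\chi_1 K_1+\chi_2 K_2)R^2\cdot 2\sqrt{T}$ and the logistic term by $(a+bR)R\cdot T$. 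For $T=T(R)$ small enough the right-hand side maps $B_R$ into itself and is a contraction, so the fixed-point theorem yields a unique mild solution on $[0,T(R)]$. Parabolic regularization then upgrades $u$ to a classical solution on $\R^N\times(0,T(R)]$ via Schauder bootstrap (the heat semigroup sends $X$ into $C^{\alpha}_{\rm unif}$ with norm $\lesssim t^{-\alpha/2}$, whereupon elliptic regularity promotes $v_i$ to $C^{2,\alpha}_{\rm unif}$ and Schauder estimates then give $u\in C^{2+\alpha,1+\alpha/2}_{\rm loc}$), and the continuity $u(\cdot,t)\to u_0$ in $X$ as $t\downarrow 0$ is built into the mild formulation.

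Non-negativity $u\ge0$ follows by recasting the equation, once $u$ is known, as a linear parabolic problem $u_t=\Delta u+b(x,t)\cdot\nabla u+c(x,t)u$ with bounded coefficients and applying the maximum principle to $u_0\ge 0$; equivalently, the iteration $u^{(n+1)}$ defined by the same contraction preserves non-negativity, which transfers to the limit. For extension, I would restart the local existence from any $t_0\in(0,T(R))$ using $u(\cdot,t_0)\in X$ as new initial datum; since the local existence time from Step 2 depends monotonically only on a bound for $\|u(\cdot,t_0)\|_\infty$, gluing produces a maximal existence interval $[0,T_{\max})$. If $T_{\max}<\infty$ and $\limsup_{t\to T_{\max}}\|u(\cdot,t)\|_\infty<\infty$, the same lower bound on the local existence time applied at $t_0$ close to $T_{\max}$ would extend the solution beyond $T_{\max}$, a contradiction; hence \eqref{Eq_Local Existence}.

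The main obstacle is estimate (c): one must close the fixed-point argument in an $L^\infty$-based space although the equation contains the drift $\nabla v_i[u]$, which is not obviously controlled by $\|u\|_\infty$ alone. The integrability of $\nabla G_{\lambda_i}$ on $\R^N$ is what makes this possible and lets us avoid working in Hölder spaces or with time-weighted norms; the rest of the argument is then standard semigroup technique tailored to the coupling with the elliptic equations.
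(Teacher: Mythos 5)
Your plan is essentially correct and takes the same route the paper does: the paper's proof of Lemma~\ref{Local Existence} consists of a single sentence deferring to the local existence theorem in \cite{SaSh1}, and that argument is precisely the mild-formulation/contraction scheme you outline, with $v_i$ eliminated through the Bessel potential $(\lambda_iI-\Delta)^{-1}$ and the drift closed in $L^\infty$ by the $L^1$-integrability of $\nabla G_{\lambda_i}$. Your estimates (a), (b), (c), the $t^{-1/2}$ smoothing of $e^{t\Delta}\nabla\cdot$, the small-time contraction on a ball, the Schauder bootstrap, and the maximal-extension/blow-up dichotomy are all sound and match the strategy of \cite{SaSh1} (see also the later use of the variation-of-constants formula and fractional powers in the present paper's proof of Theorem D, which relies on the same machinery).

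One caveat: the alternative non-negativity argument you parenthetically offer --- ``the iteration $u^{(n+1)}$ preserves non-negativity'' --- does not hold as stated, because the Duhamel map contains the term $-\int_0^t T(t-s)\nabla\cdot\bigl(u^{(n)}(\chi_1\nabla v_1-\chi_2\nabla v_2)\bigr)\,ds$, and neither the divergence operator nor $T(t-s)\nabla\cdot$ is positivity-preserving; there is no sign to exploit at the level of the integral equation. Your primary argument is the right one: once the classical solution is in hand, rewrite the first equation in nondivergence form $u_t=\Delta u+\nabla(\chi_2v_2-\chi_1v_1)\cdot\nabla u+\bigl(a+\chi_2\lambda_2v_2-\chi_1\lambda_1v_1-(b+\chi_2\mu_2-\chi_1\mu_1)u\bigr)u$ (which is exactly \eqref{global-exist-eq002}) and invoke the maximum principle for linear parabolic operators with bounded coefficients on $\R^N$. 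Keep that argument and drop the iteration remark, or replace it with a positivity-preserving modified iteration (e.g.\ truncating $u^{(n)}$ at $0$ before inserting it in the nonlinearity), and the proof is complete.
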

\begin{proof}
It follows from the similar arguments used in the proof of \cite[Theorem 1.1]{SaSh1}.
\end{proof}

\begin{proof}[Proof of Theorem A]
Let $u_{0}\in C^{b}_{\rm unif}(\R^N)$ with $u_0\ge 0$ be given and let  $(u(\cdot,\cdot;u_0),v_1(\cdot,\cdot;u_0)$, $v_2(\cdot,\cdot;u_0))$ be the classical solution of \eqref{Main Intro-eq} with initial function $u_0$ defined on the maximal interval $[0, T_{\max})$ of existence. Then,
\begin{eqnarray}\label{global-exist-eq001}
u_{t}&=&\Delta u-\chi_1\nabla(u\nabla v_1)+\chi_2\nabla(u\nabla v_2) +u(a-bu)\nonumber\\
& =& \Delta u +\nabla(\chi_2 v_2-\chi_1 v_1)\nabla u+u(a-\chi_1\Delta v_1 +\chi_2\Delta v_2 -bu),\quad x\in\R^N.
\end{eqnarray}
The second and third equations of \eqref{Main Intro-eq} yield that $\Delta v_i=\lambda_i v_i-\mu_i u$, $i=1,2$. Hence equation \eqref{global-exist-eq001} becomes
\begin{equation}\label{global-exist-eq002}
u_{t}= \Delta u +\nabla(\chi_2 v_2-\chi_1 v_1)\nabla u+u\Big(a+ (\chi_{2}\lambda_2v_2-\chi_1\lambda_1v_1)-(b+\chi_2\mu_2-\chi_1\mu_1 )u\Big),\,\,\, x\in\R^N.
 \end{equation}
 Let
\begin{equation}
C_0:=\begin{cases}
\|u_0\|_{\infty}\hspace{4.2 cm}\text{if }\ \chi_1=a=b=0,\cr
\max\{\|u_0\|_{\infty}, \frac{a}{b+\chi_2\mu_2-\chi_1\mu_1-M}\}\quad \text{if} \ b+\chi_2\mu_2-\chi_1\mu_1-M>0
\end{cases}
\end{equation}
where  $M$ is given by \eqref{gl-exist-thm-eq1}. {Let $T>0$ be a given positive real number and consider $\mathcal{E}^{T}:=C^{b}_{\rm unif}(\R^N\times[0, T])$ endowed with the norm
\begin{equation}\label{global-exist-eq-r001}
\|u\|_{\mathcal{E}^T}:=\sum_{k=1}^{\infty}\frac{1}{2^k}\|u\|_{L^{\infty}([-k,k]\times[0,T])}.
\end{equation}
We note that the convergence in $(\mathcal{E}^T,\|.\|_{\mathcal{E}^T})$ is equivalent to the uniform convergence on compact subsets on $\R^N\times[0,T]$.  Next, we consider the subset $\mathcal{E}$ of $\mathcal{E}^T$ defined by }
 $$\mathcal{E}:=\{u\in  C_{\rm unif}^b(\R^N\times[0, T])\,|\, u(\cdot,0)=u_0, 0\leq u(x,t)\leq C_0, x\in\R^N, 0\leq t\leq T\}.
  $$
{ It is clear that
\begin{equation}\label{global-exist-eq-r002}
\|u\|_{\mathcal{E}^T}\leq C_{0},  \quad \forall\ u\in\mathcal{E}.
\end{equation} It readily follows from the definition of $\mathcal{E}$ and \eqref{global-exist-eq-r002} that $\mathcal{E}$ is a closed bounded and convex subset of $\mathcal{E}^T$.} We shall show that $u(\cdot,\cdot;u_0)\in \mathcal{E}$.

  For every $u\in \mathcal{E}$ let us define $v_{i}(\cdot,\cdot;u)$, $i=1,2$ by
\begin{equation}\label{global-exist-eq003}
v_{i}(x,t;u)=\mu_{i}\int_{0}^{\infty}\int_{\R^N}\frac{e^{-\lambda_i s}}{(4\pi s)^{\frac{N}{2}}}e^{-\frac{|z-x|^2}{4s}}u(z,t)dzds, \quad x\in\R^N,\ t\in[0, T].
\end{equation} and let $U(x,t;u)$ be the solution of the initial value problem
\begin{equation}\label{global-exist-eq004}
\begin{cases}
U_{t}=\Delta U+\nabla\big(\chi_2 v_2(x,t;u)-\chi_1v_1(x,t;u)\big)\nabla U\\
\qquad\,\,\,  +U\Big(a+(\chi_2\lambda_2v_2(x,t;u) -\chi_1\lambda_1v_1(x,t;u)) -(b+\chi_2\mu_2-\chi_1\mu_1)U\Big ),\quad x\in\R^N\\
U(\cdot,0,u)=u_0(\cdot).
\end{cases}
\end{equation}
For every $u\in\mathcal{E}$, using \eqref{global-exist-eq003}, we have that
\begin{eqnarray}\label{global-exist-eq005}
( \chi_2\lambda_2v_2 -\chi_1\lambda_1v_1)(x,t;u) &= & \int_{0}^{\infty}\int_{\R^N}\left[ \chi_2\lambda_2\mu_2e^{-\lambda_{2}s} - \chi_1\lambda_1\mu_1e^{-\lambda_{1}s}\right]\frac{e^{-\frac{|x-z|^2}{4s}}}{(4\pi s)^{\frac{N}{2}}}u(z,t)dzds \nonumber\\
&= & (\chi_2\mu_2\lambda_2-\chi_1\mu_1\lambda_1)\int_0^{\infty}\int_{\R^N}e^{-\lambda_2 s}\frac{e^{-\frac{|x-z|^2}{4s}}}{(4\pi s)^{\frac{N}{2}}}u(z,t)dzds  \nonumber\\
& & +\chi_1\mu_1\lambda_1 \int_{0}^{\infty}\int_{\R^N}(e^{-\lambda_{2}s}-e^{-\lambda_{1}s})\frac{e^{-\frac{|x-z|^2}{4s}}}{(4\pi s)^{\frac{N}{2}}}u(z,t)dzds \nonumber\\
& \leq & (\chi_2\lambda_2\mu_2-\chi_1\lambda_1\mu_1)_{+}C_0\int_{0}^{\infty}\int_{\R^N}e^{-\lambda_{2}s}\frac{e^{-\frac{|x-z|^2}{4s}}}{(4\pi s)^{\frac{N}{2}}}dzds\nonumber\\
& & + \chi_1\mu_1\lambda_1 C_{0} \int_{0}^{\infty}\int_{\R^N}(e^{-\lambda_{2}s}-e^{-\lambda_{1}s})_{+}\frac{e^{-\frac{|x-z|^2}{4s}}}{(4\pi s)^{\frac{N}{2}}}dzds \nonumber\\
&=  &\frac{C_0}{\lambda_2}\Big((\chi_2\lambda_2\mu_2-\chi_1\lambda_1\mu_1)_{+}+\chi_1\mu_1(\lambda_1-\lambda_2)_{+}\Big).
\end{eqnarray}
Similarly, we have that
\begin{eqnarray}\label{global-exist-eq006}
( \chi_2\lambda_2v_2 -\chi_1\lambda_1v_1)(x,t;u) &= & \int_{0}^{\infty}\int_{\R^N}\left[ \chi_2\lambda_2\mu_2e^{-\lambda_{2}s} - \chi_1\lambda_1\mu_1e^{-\lambda_{1}s}\right]\frac{e^{-\frac{|x-z|^2}{4s}}}{(4\pi s)^{\frac{N}{2}}}u(z,t)dzds \nonumber\\
&= &\chi_2\mu_2\lambda_2\int_0^{\infty}\int_{\R^N}(e^{-\lambda_2 s}-e^{-\lambda_1 s})\frac{e^{-\frac{|x-z|^2}{4s}}}{(4\pi s)^{\frac{N}{2}}}u(z,t)dzds  \nonumber\\
& & +(\chi_2\mu_2\lambda_2-\chi_1\mu_1\lambda_1)\int_{0}^{\infty}\int_{\R^N}e^{-\lambda_{1}s}\frac{e^{-\frac{|x-z|^2}{4s}}}{(4\pi s)^{\frac{N}{2}}}u(z,t)dzds \nonumber\\
&\leq & \chi_2\mu_2\lambda_2 C_0\int_0^{\infty}\int_{\R^N}(e^{-\lambda_2 s}-e^{-\lambda_1 s})_{+}\frac{e^{-\frac{|x-z|^2}{4s}}}{(4\pi s)^{\frac{N}{2}}}dzds  \nonumber\\
& & +(\chi_2\mu_2\lambda_2-\chi_1\mu_1\lambda_1)_{+}C_{0}\int_{0}^{\infty}\int_{\R^N}e^{-\lambda_{1}s}\frac{e^{-\frac{|x-z|^2}{4s}}}{(4\pi s)^{\frac{N}{2}}}dzds \nonumber\\
& = & \frac{C_0}{\lambda_{1}}\Big( \chi_2\mu_2(\lambda_1-\lambda_2)_{+} + (\chi_2\mu_2\lambda_2-\chi_1\mu_1\lambda_1)_{+} \Big).
\end{eqnarray}
Thus, it follows from  \eqref{global-exist-eq005} and \eqref{global-exist-eq006} that for every $u\in\mathcal{E}$, we have that
\begin{equation}\label{global-exist-eq007}
(\chi_2 \lambda_2 v_2-\chi_1 \lambda_1 v_1)(x,t;u)\leq M C_{0}
\end{equation}
where $M$ is given by \eqref{gl-exist-thm-eq1}. Thus for every $u\in\mathcal{E}$, we have that
\begin{align}
U_t(x,t;u)\leq & \Delta U(x,t;u)+\nabla(\chi_2v_2-\chi_1v_1)\nabla U(x,t;u)\nonumber\\
& +\underbrace{\Big(a+M C_0-(b+\chi_2\mu_2-\chi_1\mu_1)U(x,t;u)\Big)U(x,t;u)}_{\mathcal{L}(U)}.
\end{align}
Note that
$$
\mathcal{L}(C_0)=\Big(a-(b+\chi_2\mu_2-\chi_1\mu_1-M)C_0\Big)C_0\leq0.
$$
Thus, using comparison principle for parabolic equations, we obtain that
\begin{equation}\label{global-exist-eq008}
U(x,t;u)\leq C_0,\quad \forall\ x\in\R^N,\ \forall\ t\in[0,T],\ \forall\ u\in\mathcal{E}.
\end{equation}
Thus $ U(\cdot,\cdot;u)\in \mathcal{E}$ for every $u\in\mathcal{E}$. { By the arguments in \cite[Lemma 4.3]{SaSh2}, the mapping $\mathcal{E}\ni u\mapsto U(\cdot,\cdot;u)\in\mathcal{E}$ is continuous and compact, and then by Schauder's fixed theorem, it has a fixed point $u^*$}. Clearly $(u^*,v_1(\cdot,\cdot;u^*),v_2(\cdot,\cdot;u^*))$ is a classical solution of \eqref{Main Intro-eq}. Thus, by Lemma \ref{Local Existence}, we have that $T_{\max}\geq T$ and $u(\cdot,\cdot;u_0)=u^*$. Since $T>0$ is arbitrary chosen, Theorem A follows.
\end{proof}

\section{Asymptotic stability of the constant equilibrium $(\frac{a}{b},\frac{\mu_1}{\lambda_1}\frac{a}{b},\frac{\mu_2}{\lambda_2}\frac{a}{b})$ }
In this section, we discuss the asymptotic stability of the constant equilibrium $(\frac{a}{b},\frac{\mu_1}{\lambda_1}\frac{a}{b},\frac{\mu_2}{\lambda_2}\frac{a}{b})$  of \eqref{Main Intro-eq} and prove Theorem B. Throughout this section we suppose that \eqref{gl-exist-thm-eq2} holds, so that for every nonnegative, bounded, and uniformly continuous initial function $u_0$, \eqref{Main Intro-eq} has a nonnegative bounded global classical solution
 $(u(x,t;u_0),v_1(x,t;u_0),v_2(x,t;u_0))$.

For given $u_0\in C^{b}_{\rm unif}(\R^N)$ with $u_0\geq 0 $,  define
$$
\underline{u}:=\liminf_{t\to\infty}\inf_{x\in\R^N}u(x,t;u_0)\quad \text{and}\quad \overline{u}:=\limsup_{t\to\infty}\sup_{x\in\R^N}u(x,t;u_0).
$$
Using the definition of limsup and liminf, we have that for every $\varepsilon>0$, there is $T_{\varepsilon}>0$ such that
$$
\underline{u}-\varepsilon\leq u(x,t;u_0)\leq \overline{u}+\varepsilon\quad \forall\ x\in\R^N,\ \forall\ t\geq T_{\varepsilon}.
$$
Hence, it follows from comparison principle for  elliptic equations, that
\begin{equation}\label{asym-eq03}
\mu_i(\underline{u}-\varepsilon)\leq \lambda_iv_i(x,t;u_0)\leq \mu_i(\overline{u}+\varepsilon), \forall\ x\in\R^N,\ \ \forall \ t\ge T_{\varepsilon},\ i=1,2.
\end{equation}

We first show the following important result.

 \begin{lem}\label{asym-lem1}
 Suppose that \eqref{gl-exist-thm-eq2} holds. If $\inf_{x\in\R^N}u_0(x)>0$, then
 \begin{equation}
 \inf_{x\in\R^N}u(x,t;u_0)>0,\ \quad \forall\ t>0.
 \end{equation}
 \end{lem}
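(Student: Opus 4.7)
The plan is to derive a strictly positive pointwise lower bound for $u(\cdot,t;u_0)$ by comparing it against an elementary exponential subsolution, using the maximum principle for bounded solutions on $\R^N$.

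First, I would rewrite the $u$-equation exactly as in \eqref{global-exist-eq002}, namely
$$u_t = \Delta u + \nabla(\chi_2 v_2 - \chi_1 v_1)\cdot\nabla u + u\bigl[a + (\chi_2\lambda_2 v_2 - \chi_1\lambda_1 v_1) - (b+\chi_2\mu_2-\chi_1\mu_1)u\bigr],$$
and view this as a \emph{linear} parabolic equation $u_t = \Delta u + \vec b(x,t)\cdot\nabla u + c(x,t)\,u$ with drift $\vec b := \nabla(\chi_2 v_2 - \chi_1 v_1)$ and zero-order coefficient $c(x,t) := a + (\chi_2\lambda_2 v_2 - \chi_1\lambda_1 v_1) - (b+\chi_2\mu_2-\chi_1\mu_1)u$, where $(u,v_1,v_2)$ is the given global classical solution.

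Second, I would establish global bounds on $\vec b$ and $c$. Theorem A gives $\|u(\cdot,t;u_0)\|_\infty\le C_0$ for some $C_0$ independent of $t$; the elliptic comparison principle applied to $(\Delta-\lambda_i I)v_i = -\mu_i u$ then yields $0\le \lambda_i v_i(\cdot,t;u_0)\le \mu_i C_0$. Hence $c$ is uniformly bounded, say $|c(x,t)|\le\Lambda$ on $\R^N\times[0,\infty)$. Standard interior $W^{2,p}$ elliptic estimates for the $v_i$-equations, together with the fact that $u(\cdot,t;u_0)\in C^b_{\rm unif}(\R^N)$ uniformly in $t$, imply $\nabla v_i\in L^\infty(\R^N)$ uniformly in $t$, so $\vec b$ is also bounded on $\R^N\times[0,T]$ for every $T>0$.

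Third, I would compare with a decaying exponential. Since $u\ge 0$, the equation gives $u_t \ge \Delta u + \vec b\cdot\nabla u - \Lambda u$. Set $\phi(t) := \bigl(\inf_{x\in\R^N}u_0(x)\bigr)\,e^{-\Lambda t}$; by hypothesis $\phi(0)>0$, and $\phi$ satisfies $\phi_t = \Delta\phi + \vec b\cdot\nabla\phi - \Lambda\phi$. The difference $w:=u-\phi$ is bounded, satisfies $w(\cdot,0)\ge 0$, and obeys
$$w_t - \Delta w - \vec b\cdot\nabla w + \Lambda w \ge 0 \qquad \text{on }\R^N\times(0,\infty).$$
Because the coefficients are bounded and $w$ is bounded, the maximum principle for bounded solutions on $\R^N$ (after multiplication by $e^{-\Lambda t}$ to absorb the zero-order term, reducing to a cooperative linear parabolic inequality) forces $w\ge 0$. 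Consequently $\inf_{x\in\R^N}u(x,t;u_0)\ge \phi(t)>0$ for every $t>0$.

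The main technical point is justifying the comparison principle on the unbounded domain $\R^N$; this reduces to the boundedness of $\vec b$ and $c$, which is where the uniform continuity hypothesis $u_0\in C^b_{\rm unif}(\R^N)$ and the elliptic regularity for $v_i$ enter. Everything else is elementary. Note that this argument does not use \eqref{gl-exist-thm-eq2} in any essential way beyond ensuring that the global bounded solution exists and stays nonnegative, so the crude exponential lower bound $\phi(t)=e^{-\Lambda t}\inf u_0$ suffices and a finer logistic subsolution is unnecessary at this stage.
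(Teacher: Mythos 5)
Your argument is correct and its skeleton matches the paper's: rewrite the $u$-equation into the drift form \eqref{global-exist-eq002}, construct a spatially-constant, strictly positive subsolution, and invoke the parabolic comparison principle on $\R^N$ (justified because $u$, $v_i$, $\nabla v_i$ are uniformly bounded). The only substantive difference is the choice of subsolution. The paper keeps the nonlinearity: bounding $\chi_1\lambda_1 v_1\le K$ and using $v_2\ge 0$ gives $u_t\ge\Delta u+\nabla(\chi_2v_2-\chi_1v_1)\cdot\nabla u+\big(a-K-(b+\chi_2\mu_2-\chi_1\mu_1)u\big)u$, so $u$ lies above the solution $W(t)$ of the logistic ODE $W'=W\big(a-K-(b+\chi_2\mu_2-\chi_1\mu_1)W\big)$ with $W(0)=\inf u_0>0$, which stays positive for all time because $b+\chi_2\mu_2-\chi_1\mu_1>0$. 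You linearize instead, taking a uniform two-sided bound $|c|\le\Lambda$ on the full zero-order coefficient and comparing against $\phi(t)=(\inf u_0)e^{-\Lambda t}$. This is a legitimate simplification; it is cruder only in that $\phi$ always decays, whereas the paper's $W(t)$ can approach the positive limit $(a-K)/(b+\chi_2\mu_2-\chi_1\mu_1)$ when $a>K$. Two small corrections: the inequality $w_t-\Delta w-\vec b\cdot\nabla w+\Lambda w\ge 0$ already has a nonnegative zero-order coefficient, so no exponential change of variables is needed, and if you did use one it should be $\tilde w=e^{\Lambda t}w$, not $e^{-\Lambda t}w$. Also, the uniform bound on $\vec b$ is most directly read off the explicit integral representation \eqref{global-exist-eq003} (as is done in Lemma \ref{asym-lem2}); the decisive input is the global sup bound on $u$ from Theorem A, not the uniform continuity of $u_0$ per se.
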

 \begin{proof}
 Let $K:=\chi_{1}\lambda_{1}\sup_{x\in\R^N}v_{1}(x,t;u_0)$. Thus, it follows from \eqref{global-exist-eq002} that
 \begin{align*}
u_{t}(\cdot,\cdot;u_0)\geq &\Delta u +\nabla(\chi_{2}v_2(\cdot,\cdot;u_0)-\chi_{1}v_1(\cdot,\cdot;u_0))\nabla u(\cdot,\cdot;u_0)\\
&\,\,\, +(a-K-(b+\chi_2\mu_2-\chi_1\mu_1)u(\cdot,\cdot;u_0))u(\cdot,\cdot;u_0).
\end{align*}
Hence, comparison principle for parabolic equations implies that
$$
u(x,t;u_0)\geq W(t),\quad \forall\ t\geq 0, \ x\in\R^N,
$$
where $W$ is the solution of the ODE
$$
\begin{cases}
W_t=W(a-K-(b+\chi_2\mu_2-\chi_{1}\mu_1)W),\ t>0,\\
W(0)=\inf_{x\in\R^N}u_0(x).
\end{cases}
$$
Since $b+\chi_{2}\mu_2-\chi_1\mu_1>0$, and $\inf_{x\in\R^{N}}u_{0}(x)>0$, we have that $W(t)$ is defined for all time and satisfies $W(t)>0$ for every $t\geq 0$. Hence, we obtain that
$0<W(t)\leq \inf_{x\in\R^N}u(x,t;u_0)$ for all $ t\geq 0$.
 \end{proof}

\begin{proof}[Proof of Theorem B] We divide the proof into two cases.

\smallskip

\noindent {\bf Case I.}  Assume that $b+\chi_2\mu_2-\chi_1\mu_1-\frac{1}{\lambda_2}\Big[|\chi_1\mu_1\lambda_1- \chi_2\mu_2\lambda_2|+\chi_1\mu_1|\lambda_1-\lambda_2| \Big]>0$.

\smallskip

For every $t\geq T_\varepsilon$ {($T_\epsilon$ is such that \eqref{asym-eq03} holds)}, and $x\in\R^N$, we have that
\begin{eqnarray}\label{asym-eq1}
(\chi_2\lambda_2v_2-\chi_1\lambda_1v_1)(x,t;u_0)& = & (\chi_2\mu_2\lambda_2-\chi_1\mu_1\lambda_1)\int_0^\infty\int_{\R^N}e^{-\lambda_2 s}\frac{e^{-\frac{|x-z|^2}{4s}}}{(4\pi s)^{\frac{N}{2}}}u(z,t;u_0)dzds\nonumber\\
& + &\chi_1\mu_1\lambda_1\int_0^\infty\int_{\R^N}(e^{-\lambda_2 s}-e^{-\lambda_1 s})\frac{e^{-\frac{|x-z|^2}{4s}}}{(4\pi s)^{\frac{N}{2}}}u(z,t;u_0)dzdt\nonumber\\
&\leq & \frac{1}{\lambda_2}\left[(\chi_2\mu_2\lambda_2-\chi_1\mu_1\lambda_1)_{+}+\chi_{1}\mu_{1}(\lambda_1-\lambda_2)_{+}\right](\overline{u}+\varepsilon)\nonumber\\
& & -\frac{1}{\lambda_2}\left[(\chi_2\mu_2\lambda_2-\chi_1\mu_1\lambda_1)_{-}+\chi_1\mu_1(\lambda_1-\lambda_2)_{-}\right](\underline{u}-\varepsilon)
\end{eqnarray}
and
\begin{eqnarray}\label{asym-eq2}
(\chi_2\lambda_2v_2-\chi_1\lambda_1v_1)(x,t;u_0)& = & (\chi_2\mu_2\lambda_2-\chi_1\mu_1\lambda_1)\int_0^\infty\int_{\R^N}e^{-\lambda_2 s}\frac{e^{-\frac{|x-z|^2}{4s}}}{(4\pi s)^{\frac{N}{2}}}u(z,t;u_0)dzds\nonumber\\
& + &\chi_1\mu_1\lambda_1\int_0^\infty\int_{\R^N}(e^{-\lambda_2 s}-e^{-\lambda_1 s})\frac{e^{-\frac{|x-z|^2}{4s}}}{(4\pi s)^{\frac{N}{2}}}u(z,t;u_0)dzdt\nonumber\\
&\geq & \frac{1}{\lambda_2}\left[(\chi_2\mu_2\lambda_2-\chi_1\mu_1\lambda_1)_{+}+\chi_{1}\mu_{1}(\lambda_1-\lambda_2)_{+}\right](\underline{u}-\varepsilon)\nonumber\\
& & -\frac{1}{\lambda_2}\left[(\chi_2\mu_2\lambda_2-\chi_1\mu_1\lambda_1)_{-}+\chi_1\mu_1(\lambda_1-\lambda_2)_{-}\right](\overline{u}+\varepsilon).
\end{eqnarray}
Hence,  for every $t\geq T_\varepsilon$, $x\in\R^N$, it follows from \eqref{global-exist-eq002}, \eqref{asym-eq03} and \eqref{asym-eq1} that
\begin{eqnarray}
u_t&\leq & \Delta u+\nabla(\chi_2v_2-\chi_1v_1)\nabla u + (a +\frac{1}{\lambda_2}\left[(\chi_2\mu_2\lambda_2-\chi_1\mu_1\lambda_1)_{+}+\chi_{1}\mu_{1}(\lambda_1-\lambda_2)_{+}\right](\overline{u}+\varepsilon) )u\nonumber\\
& & -(\frac{1}{\lambda_2}\left[(\chi_2\mu_2\lambda_2-\chi_1\mu_1\lambda_1)_{-}+\chi_1\mu_1(\lambda_1-\lambda_2)_{-}\right](\underline{u}-\varepsilon)  +(b+\chi_2\mu_2-\chi_1\mu_1)u)u.
\end{eqnarray}
Thus, by comparison principle for parabolic equations, we have that
\begin{equation}\label{asym-eq3}
u(x,t;u_0)\leq U_{\varepsilon}(t), \quad \forall x\in\R^N,\ t\geq T_\varepsilon,
\end{equation}
where
$U_\varepsilon(t)$ is the solution of the ODE
$$
\begin{cases}
\partial_t U=\Big(a +\frac{1}{\lambda_2}\left[(\chi_2\mu_2\lambda_2-\chi_1\mu_1\lambda_1)_{+}+\chi_{1}\mu_{1}(\lambda_1-\lambda_2)_{+}\right](\overline{u}+\varepsilon) \Big)U\nonumber\\
\ \ \ \ \ -\Big(\frac{1}{\lambda_2}\left[(\chi_2\mu_2\lambda_2-\chi_1\mu_1\lambda_1)_{-}+\chi_1\mu_1(\lambda_1-\lambda_2)_{-}\right](\underline{u}-\varepsilon)  +(b+\chi_2\mu_2-\chi_1\mu_1)U\Big)U \quad t>T_\varepsilon,\cr
U(T_\varepsilon)=\|u(\cdot,T_\varepsilon;u_0)\|_{\infty}.
\end{cases}
$$
Since $b+\chi_2\mu_2-\chi_1\mu_1>0$ and $\|u(\cdot,T_{\infty};u_0)\|_{\infty}>0$, we have that $U_{\varepsilon}(t)$ is defined for all time $t\geq T_\varepsilon$ and satisfies
\begin{align*}
\lim_{t\to\infty}U_{\varepsilon}=&\frac{1}{b+\chi_2\mu_2-\chi_1\mu_1}\Big\{a+\frac{1}{\lambda_2}\left[(\chi_2\mu_2\lambda_2-\chi_1\mu_1\lambda_1)_{+}+\chi_{1}
\mu_{1}(\lambda_1-\lambda_2)_{+}\right](\overline{u}+\varepsilon)\\
 &\qquad\qquad\qquad\qquad  -\frac{1}{\lambda_2}\left[(\chi_2\mu_2\lambda_2-\chi_1\mu_1\lambda_1)_{-}+\chi_1\mu_1(\lambda_1-\lambda_2)_{-}\right](\underline{u}-\varepsilon)\Big\}_+
\end{align*}
This combined with \eqref{asym-eq3} yield that
\begin{align*}
\overline{u}&\leq  \frac{1}{b+\chi_2\mu_2-\chi_1\mu_1}
 \Big\{a+\frac{1}{\lambda_2}\left[(\chi_2\mu_2\lambda_2-\chi_1\mu_1\lambda_1)_{+}+\chi_{1}\mu_{1}(\lambda_1-\lambda_2)_{+}\right](\overline{u}+\varepsilon)
\\
 & \qquad\qquad\qquad\qquad \quad -\frac{1}{\lambda_2}\left[(\chi_2\mu_2\lambda_2-\chi_1\mu_1\lambda_1)_{-}+\chi_1\mu_1(\lambda_1-\lambda_2)_{-}\right](\underline{u}-\varepsilon)\Big\}_+.
\end{align*}
Letting $\varepsilon$ goes to $0$ in the last inequality, we obtain that
\begin{align*}
\overline{u}&\leq  \frac{1}{b+\chi_2\mu_2-\chi_1\mu_1}
 \Big\{a+\frac{1}{\lambda_2}\left[(\chi_2\mu_2\lambda_2-\chi_1\mu_1\lambda_1)_{+}+\chi_{1}\mu_{1}(\lambda_1-\lambda_2)_{+}\right]\overline{u}
\\
 &\qquad\qquad\qquad\quad \qquad -\frac{1}{\lambda_2}\left[(\chi_2\mu_2\lambda_2-\chi_1\mu_1\lambda_1)_{-}+\chi_1\mu_1(\lambda_1-\lambda_2)_{-}\right]\underline{u}\Big\}_+.
\end{align*}
If \begin{align*}
\Big\{&a+\frac{1}{\lambda_2}\left[(\chi_2\mu_2\lambda_2-\chi_1\mu_1\lambda_1)_{+}+\chi_{1}\mu_{1}(\lambda_1-\lambda_2)_{+}\right]\overline{u}\\
&- \frac{1}{\lambda_2}\left[(\chi_2\mu_2\lambda_2-\chi_1\mu_1\lambda_1)_{-}+\chi_1\mu_1(\lambda_1-\lambda_2)_{-}\right]\underline{u}\Big\}_+=0,
 \end{align*}
 then $\overline{u}=\underline{u}=0$. This in turn yields that
\begin{align*}
0=&\Big\{a+\frac{1}{\lambda_2}\left[(\chi_2\mu_2\lambda_2-\chi_1\mu_1\lambda_1)_{+}+\chi_{1}\mu_{1}(\lambda_1-\lambda_2)_{+}\right]\overline{u}\\
&-\frac{1}{\lambda_2}\left[(\chi_2\mu_2\lambda_2-\chi_1\mu_1\lambda_1)_{-}+\chi_1\mu_1(\lambda_1-\lambda_2)_{-}\right]\underline{u}\Big\}_+=a,
\end{align*}
which is impossible, since $a>0$. Hence
\begin{align}\label{asym-eq4}
\overline{u}\leq &\frac{1}{b+\chi_2\mu_2-\chi_1\mu_1}\Big[ a+\frac{1}{\lambda_2}\left[(\chi_2\mu_2\lambda_2-\chi_1\mu_1\lambda_1)_{+}+\chi_{1}\mu_{1}(\lambda_1-\lambda_2)_{+}\right]\overline{u}\nonumber\\
&\qquad\qquad\qquad \qquad  -\frac{1}
{\lambda_2}\left[(\chi_2\mu_2\lambda_2-\chi_1\mu_1\lambda_1)_{-}+\chi_1\mu_1(\lambda_1-\lambda_2)_{-}\right]\underline{u}\Big].
\end{align}

On the other hand, for every $t\geq T_\varepsilon$, $x\in\R^N$, it follows from \eqref{global-exist-eq002}, \eqref{asym-eq03} and \eqref{asym-eq1} that
\begin{eqnarray}
u_t&\geq & \Delta u+\nabla(\chi_2v_2-\chi_1v_1)\nabla u + \Big(a +\frac{1}{\lambda_2}\left[(\chi_2\mu_2\lambda_2-\chi_1\mu_1\lambda_1)_{+}+\chi_{1}\mu_{1}(\lambda_1-\lambda_2)_{+}\right](\underline{u}-\varepsilon) \Big)u\nonumber\\
& & -\Big(\frac{1}{\lambda_2}\left[(\chi_2\mu_2\lambda_2-\chi_1\mu_1\lambda_1)_{-}+\chi_1\mu_1(\lambda_1-\lambda_2)_{-}\right](\overline{u}+\varepsilon)  +(b+\chi_2\mu_2-\chi_1\mu_1)u\Big)u.
\end{eqnarray}
Thus, by comparison principle for parabolic equations, we have that
\begin{equation}\label{asym-eq5}
u(x,t;u_0)\geq U^{\varepsilon}(t), \quad \forall x\in\R^N,\ t\geq T_\varepsilon,
\end{equation}
where $U^\varepsilon(t)$ is the solution of the ODE
$$
\begin{cases}
\partial_t U=\Big(a +\frac{1}{\lambda_2}\left[(\chi_2\mu_2\lambda_2-\chi_1\mu_1\lambda_1)_{+}+\chi_{1}\mu_{1}(\lambda_1-\lambda_2)_{+}\right](\underline{u}-\varepsilon) \Big)U\nonumber\\
\ \ \ \ \ -\Big(\frac{1}{\lambda_2}\left[(\chi_2\mu_2\lambda_2-\chi_1\mu_1\lambda_1)_{-}+\chi_1\mu_1(\lambda_1-\lambda_2)_{-}\right](\overline{u}+\varepsilon)  +(b+\chi_2\mu_2-\chi_1\mu_1)U\Big)U, \quad t>T_\varepsilon\cr
U(T_\varepsilon)=\inf_{x\in\R^N}u(x,T_\varepsilon).
\end{cases}
$$

But, by Lemma \ref{asym-lem1} we have that $\inf_{x\in \R^N}u(x,T_\varepsilon;u_0) >0$. Since $b+\chi_2\mu_2-\chi_1\mu_1>0$, we have that $U^{\varepsilon}(t)$ is defined for all time $t\geq T_\varepsilon$ and satisfies
\begin{align*}
\lim_{t\to\infty}U^{\varepsilon}&= \frac{1}{b+\chi_2\mu_2-\chi_1\mu_1}\Big\{a +\frac{1}{\lambda_2}\left[(\chi_2\mu_2\lambda_2-\chi_1\mu_1\lambda_1)_{+}+\chi_{1}\mu_{1}(\lambda_1-\lambda_2)_{+}\right](\underline{u}-\varepsilon)
\\
&\quad \qquad\qquad\qquad\qquad  -(\frac{1}{\lambda_2}\left[\chi_2\mu_2\lambda_2-\chi_1\mu_1\lambda_1)_{-}+\chi_1\mu_1(\lambda_1-\lambda_2)_{-}\right](\overline{u}+\varepsilon)\Big\}_{+}.
\end{align*}
This combined with \eqref{asym-eq5} yield that
\begin{align*}
\underline{u}&\geq\frac{1}{b+\chi_2\mu_2-\chi_1\mu_1}
\Big\{a +\frac{1}{\lambda_2}\left[(\chi_2\mu_2\lambda_2-\chi_1\mu_1\lambda_1)_{+}+\chi_{1}\mu_{1}(\lambda_1-\lambda_2)_{+}\right](\underline{u}-\varepsilon)\\
& \qquad\qquad\qquad \qquad\qquad -(\frac{1}{\lambda_2}\left[\chi_2\mu_2\lambda_2-\chi_1\mu_1\lambda_1)_{-}+\chi_1\mu_1(\lambda_1-\lambda_2)_{-}\right](\overline{u}+\varepsilon)\Big\}_{+}.
\end{align*}
Letting $\varepsilon$ goes to $0$ in the last inequality, we obtain that
\begin{align}
\label{asym-eq6}
\underline{u}&\geq\frac{1}{b+\chi_2\mu_2-\chi_1\mu_1}
\Big\{a +\frac{1}{\lambda_2}\left[(\chi_2\mu_2\lambda_2-\chi_1\mu_1\lambda_1)_{+}+\chi_{1}\mu_{1}(\lambda_1-\lambda_2)_{+}\right]\underline{u}\nonumber\\
& \quad -(\frac{1}{\lambda_2}\left[\chi_2\mu_2\lambda_2-\chi_1\mu_1\lambda_1)_{-}+\chi_1\mu_1(\lambda_1-\lambda_2)_{-}\right]\overline{u}\Big\}_{+}.
\end{align}
It follows from inequalities \eqref{asym-eq4} and \eqref{asym-eq6} that
\begin{equation}\label{asym-eq7}
\Big(b+\chi_2\mu_2-\chi_1\mu_1-\frac{1}{\lambda_2}\big[|\chi_1\mu_1\lambda_1- \chi_2\mu_2\lambda_2|+\chi_1\mu_1|\lambda_1-\lambda_2| \big]\Big)(\overline{u}-\underline{u})\leq 0.
\end{equation}

 In this case, it follows from inequality \eqref{asym-eq7} that $\overline{u}=\underline{u}$.   Combining this with \eqref{asym-eq4} and \eqref{asym-eq6}, we obtain that $\overline{u}=\underline{u}=\frac{a}{b}$. This end the first case.

 \medskip

\noindent {\bf Case II.}  Assume that $b+\chi_2 \mu_2 -\chi_1\mu_1 -\frac{1}{\lambda_{1}}\big[|\chi_1\mu_1\lambda_1-\chi_2\mu_2\lambda_2|+\chi_2\mu_2|\lambda_1-\lambda_2|\big]>0$.

\smallskip

Rewrite $\chi_2\lambda_2v_2-\chi_1\lambda_1v_1$ in the form
\begin{eqnarray}\label{asym-eq8_1}
(\chi_2 \lambda_2 v_2-\chi_1 \lambda_1 v_1)(x,t;u_0)& = & \chi_2\mu_2\lambda_2\int_0^\infty\int_{\R^N}(e^{-\lambda_2 s}-e^{-\lambda_{1}s})\frac{-\frac{|x-z|^2}{4s}}{(4\pi s)^{\frac{N}{2}}}u(z,t;u_0)dzds\nonumber\\
& &+ (\chi_2\mu_2\lambda_2-\chi_1\mu_1\lambda_1)\int_0^\infty\int_{\R^N}e^{-\lambda_1 s}\frac{e^{-\frac{|x-z|^2}{4s}}}{(4\pi s)^{\frac{N}{2}}}u(z,t;u_0)dzds.\nonumber\\
\end{eqnarray}
It follows from the arguments used to establish inequalities  \eqref{asym-eq4} and \eqref{asym-eq6} that
\begin{align}\label{asym-eq8}
\overline{u}\,\, \leq\,\,  \frac{1}{ b+\chi_2\mu_2-\chi_1\mu_1} \Big\{&a+\frac{1}{\lambda_1}\left[(\chi_2\mu_2(\lambda_1-\lambda_2)_{+}+\chi_2\mu_2(\lambda_1-\lambda_2)_{+}\right]\overline{u} \nonumber\\
& -\frac{1}{\lambda_1}\left[(\chi_2\mu_2\lambda_2-\chi_1\mu_1\lambda_1)_{-}+\chi_2\mu_2(\lambda_1-\lambda_2)_+\right]\underline{u}\Big\}
\end{align}
and
\begin{align}\label{asym-eq9}
\underline{u}\geq \frac{1}{ b+\chi_2\mu_2-\chi_1\mu_1}
\Big\{&a+\frac{1}{\lambda_1}\left[(\chi_2\mu_2(\lambda_1-\lambda_2)_{+}+\chi_2\mu_2(\lambda_1-\lambda_2)_{+}\right]\underline{u}\nonumber\\ &-\frac{1}{\lambda_1}\left[(\chi_2\mu_2\lambda_2-\chi_1\mu_1\lambda_1)_{-}+\chi_2\mu_2(\lambda_1-\lambda_2)_+\right]\overline{u}\Big\}
\end{align}
hold respectively.
It follows from \eqref{asym-eq8} and \eqref{asym-eq9} that
\begin{equation}\label{asym-eq10}
\Big( b+\chi_2 \mu_2 -\chi_1\mu_1 -\frac{1}{\lambda_{1}}\big[|\chi_1\mu_1\lambda_1-\chi_2\mu_2\lambda_2|+\chi_2\mu_2|\lambda_1-\lambda_2|\big]\Big)(\overline{u}-\underline{u})\leq 0.
\end{equation}
Since $b+\chi_2 \mu_2 -\chi_1\mu_1 -\frac{1}{\lambda_{1}}\left[|\chi_1\mu_1\lambda_1-\chi_2\mu_2\lambda_2|+\chi_2\mu_2|\lambda_1-\lambda_2|\right]>0$, it follows from inequality \eqref{asym-eq10} that $\overline{u}=\underline{u}$.   Combining this with \eqref{asym-eq8} and \eqref{asym-eq9}, we obtain that $\overline{u}=\underline{u}=\frac{a}{b}$. This end the second case.

Therefore, it follows from the results of cases I and II that if
 \begin{align*}
 b+\chi_2\mu_2-\chi_1\mu_1 > \min\Big \{&\frac{1}{\lambda_{2}}\left[|\chi_1\mu_1\lambda_1-\chi_2\mu_2\lambda_2|+\chi_1\mu_1|\lambda_1-\lambda_2|\right],\\ &\frac{1}{\lambda_{1}}\left[|\chi_1\mu_1\lambda_1-\chi_2\mu_2\lambda_2|+\chi_2\mu_2|\lambda_1-\lambda_2|\right]\Big\},
 \end{align*}
then $\overline{u}=\underline{u}=\frac{a}{b}$. Thus Theorem B follows.
\end{proof}

\section{Spreading properties of classical solutions }

In this section we study how fast the mobiles species spread over time and prove Theorems C and  D. Throughout this section, we always suppose that $u_0\in C^{b}_{\rm unif}(\R^N)$, $u_0(x)\geq 0$  has compact and nonempty support.  The next three lemmas will be useful in the subsequent.

\begin{lem}\label{asym-lem2}
Let $u_0\in C_{\rm unif}^{b}(\R^N)$, $u_0\geq 0$, and $(u(\cdot,\cdot;u_0),v_{1}(\cdot,\cdot;u_0),v_2(\cdot,\cdot;u_0))$ be the classical solution of \eqref{Main Intro-eq} with $u(\cdot,0;u_0)=u_0$. Then for every $i\in\{1,\cdots,N\}$, we have that
\begin{align}\label{asym-eq16}
&\|\partial_{x_i}(\chi_2v_2-\chi_1v_1)(\cdot,t;u_0)\|_{\infty}\nonumber \\
& \leq \min\Big\{\frac{|\chi_2\mu_2-\chi_1\mu_1|}{2\sqrt{\lambda_2}}+\frac{\chi_1\mu_1|\sqrt{\lambda_1}-\sqrt{\lambda_2}|}{2\sqrt{\lambda_1\lambda_2}}, \frac{|\chi_1\mu_1-\chi_2\mu_2|}{2\sqrt{\lambda_1}}+\frac{\chi_2\mu_2|\sqrt{\lambda_2}-\sqrt{\lambda_1}|}{2\sqrt{\lambda_1\lambda_2}}
\Big\}\|u(\cdot,t;u_0)\|_{\infty}
\end{align}
for every   $t\geq 0$.
\end{lem}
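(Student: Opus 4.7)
The plan is to start from the explicit representation \eqref{global-exist-eq003} of $v_1,v_2$ as time-integrated heat convolutions with $\mu_i u(\cdot,t;u_0)$, differentiate under the integral in $x_i$, and obtain
\[
\partial_{x_i}(\chi_2 v_2 - \chi_1 v_1)(x,t;u_0) = -\int_0^\infty\int_{\R^N} \bigl[\chi_2\mu_2 e^{-\lambda_2 s} - \chi_1\mu_1 e^{-\lambda_1 s}\bigr]\,\frac{x_i - z_i}{2s}\,\frac{e^{-|x-z|^2/(4s)}}{(4\pi s)^{N/2}}\, u(z,t;u_0)\,dz\,ds.
\]
I would then estimate this integral in two symmetric ways, one for each entry of the minimum in \eqref{asym-eq16}.

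The key observation is that the kernel $K_s(z) := \frac{x_i-z_i}{2s}\,\frac{e^{-|x-z|^2/(4s)}}{(4\pi s)^{N/2}}$ is odd in $z_i$ about $x_i$, so $\int_{\R^N} K_s(z)\,dz = 0$ for every $s>0$. Consequently the integrand is unchanged if $u(z,t;u_0)$ is replaced by $u(z,t;u_0)-c$ for any constant $c$. Choosing $c := \tfrac{1}{2}\|u(\cdot,t;u_0)\|_\infty$ and using $u\geq 0$, we have the pointwise bound $|u(z,t;u_0)-c| \leq \tfrac{1}{2}\|u(\cdot,t;u_0)\|_\infty$. This mean-value cancellation supplies the factor $\tfrac{1}{2}$ appearing in the denominators of \eqref{asym-eq16}.

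Next I would decompose the bracket in either of the two forms
\[
\chi_2\mu_2 e^{-\lambda_2 s} - \chi_1\mu_1 e^{-\lambda_1 s} = (\chi_2\mu_2-\chi_1\mu_1)e^{-\lambda_2 s} + \chi_1\mu_1(e^{-\lambda_2 s}-e^{-\lambda_1 s})
\]
or, symmetrically,
\[
\chi_2\mu_2 e^{-\lambda_2 s} - \chi_1\mu_1 e^{-\lambda_1 s} = \chi_2\mu_2(e^{-\lambda_2 s}-e^{-\lambda_1 s}) + (\chi_2\mu_2-\chi_1\mu_1)e^{-\lambda_1 s},
\]
and apply the triangle inequality. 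Via Fubini and the Gaussian first absolute moment $\int_{\R}|y|(4\pi s)^{-1/2}e^{-y^2/(4s)}\,dy = 2\sqrt{s/\pi}$, one checks
\[
\int_{\R^N}\frac{|x_i-z_i|}{2s}\,\frac{e^{-|x-z|^2/(4s)}}{(4\pi s)^{N/2}}\,dz = \frac{1}{\sqrt{\pi s}},
\]
so that $\int_0^\infty e^{-\lambda s}/\sqrt{\pi s}\,ds = 1/\sqrt{\lambda}$, and moreover $\int_0^\infty |e^{-\lambda_2 s}-e^{-\lambda_1 s}|/\sqrt{\pi s}\,ds = |\sqrt{\lambda_1}-\sqrt{\lambda_2}|/\sqrt{\lambda_1\lambda_2}$ (the latter because the integrand has constant sign in $s$). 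Combining with the factor $\tfrac{1}{2}\|u(\cdot,t;u_0)\|_\infty$ from the cancellation step produces each of the two expressions inside the minimum in \eqref{asym-eq16}; taking the smaller of the two finishes the proof.

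The main obstacle is the cancellation step: bounding the integral naively by pulling the absolute value inside the $dz$-integral would yield only $1/\sqrt{\lambda_2}$ in place of $1/(2\sqrt{\lambda_2})$. To recover the sharp factor $\tfrac{1}{2}$ one must exploit the zero-mean property of the spatial derivative of the heat kernel in $z$, subtracting the constant $c=\tfrac{1}{2}\|u(\cdot,t;u_0)\|_\infty$ \emph{before} applying the triangle inequality. Everything else reduces to two elementary one-variable integrals.
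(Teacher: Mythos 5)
Your argument is correct and delivers the stated bound, but the way you extract the factor $\tfrac12$ is expositorily different from the paper's. Both proofs use the same explicit representation of $v_1,v_2$, differentiate under the integral, and split $\chi_2\mu_2 e^{-\lambda_2 s}-\chi_1\mu_1 e^{-\lambda_1 s}$ in the same two ways (one per entry of the minimum); the underlying cancellation---oddness of the spatial kernel combined with $u\ge 0$---is also the same. What differs is the mechanism: the paper changes variables to $z=x+2\sqrt{s}\,w$ and splits the $w_i$-integral at $w_i=0$, so each half is a nonnegative quantity bounded by $\tfrac12\|u(\cdot,t;u_0)\|_\infty$ (with $\tfrac12=\int_0^\infty \tau e^{-\tau^2}\,d\tau$), and the magnitude of the resulting difference of nonnegative terms is controlled by the larger of the two; this pairing of the four terms in \eqref{asym-eq15} is left implicit in the paper's last line, and a naive term-by-term absolute value would lose the factor $\tfrac12$. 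You instead stay in $z$-coordinates, note that the kernel has zero mean in $z$, subtract the constant $\tfrac12\|u(\cdot,t;u_0)\|_\infty$ from $u$ \emph{before} applying the triangle inequality, and then simply multiply $\|u-\tfrac12\|u\|_\infty\|_\infty\le\tfrac12\|u\|_\infty$ against the $L^1$-norm of the kernel, $1/\sqrt{\pi s}$. Your route makes the origin of the $\tfrac12$ immediate and avoids the change of variables; the remaining one-dimensional $s$-integrals $\int_0^\infty e^{-\lambda s}/\sqrt{\pi s}\,ds=1/\sqrt\lambda$ and $\int_0^\infty |e^{-\lambda_2 s}-e^{-\lambda_1 s}|/\sqrt{\pi s}\,ds=|\sqrt{\lambda_1}-\sqrt{\lambda_2}|/\sqrt{\lambda_1\lambda_2}$ (the integrand having constant sign) are computed correctly. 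Either version is complete and elementary.
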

\begin{proof}For every $i\in\{1,\cdots,N\}$ and $k\in\{1,2\}$, we have
 that
\begin{align*}
& \partial_{x_i}(\chi_kv_k)(x,t;u_0)\nonumber\\
 & =\frac{\chi_k\mu_k}{\pi^{\frac{N}{2}}}\int_0^\infty\int_{\R^N}\frac{ e^{-\lambda_k s}}{\sqrt{ s}}z_i e^{-|z|^2}u(x+2\sqrt{s}z,t;u_0)dzds  \nonumber\\
&=\frac{\chi_k\mu_k}{\pi^{\frac{N}{2}}}\int_0^\infty\int_{\R^{N-1}}\frac{ e^{-\lambda_k s}e^{-|y|^2}}{\sqrt{ s}}\left[\int_{\R}\tau e^{-\tau^2}u(x+2\sqrt{s}\tau e_i +2\sqrt{s} \pi_i^{-1}(y),t;u_0)d\tau\right]dyds
\nonumber\\
&=\frac{\chi_k\mu_k}{\pi^{\frac{N}{2}}}\int_0^\infty\int_{\R^{N-1}}\frac{ e^{-\lambda_k s}e^{-|y|^2}}{\sqrt{ s}}\left[\int_0^\infty \tau e^{-\tau^2}u(x+2\sqrt{s}\tau e_i +2\sqrt{s} \pi_i^{-1}(y),t;u_0)d\tau\right]dyds
\nonumber\\
& - \frac{\chi_k\mu_k}{\pi^{\frac{N}{2}}}\int_0^\infty\int_{\R^{N-1}}\frac{ e^{-\lambda_k s}e^{-|y|^2}}{\sqrt{ s}}\left[\int_0^\infty \tau e^{-\tau^2}u(x-2\sqrt{s}\tau e_i +2\sqrt{s} \pi_i^{-1}(y),t;u_0)d\tau\right]dyds,
\end{align*}
 where $e_i=(\delta_{1i},\delta_{2i},\cdots,\delta_{Ni})$ with $\delta_{ij}=0$ if $i\not =j$ and $\delta_{ii}=1$ for
 $i,j=1,2,\cdots,N$, and $\pi_i^{-1}(y)=(y_1,y_2,\cdots,y_{i-1},0,y_i,\cdots,y_{N-1})$.
Hence,
\begin{align}\label{asym-eq15}
& \partial_{x_i}(\chi_2v_2-\chi_1v_1)(x,t;u_0)\nonumber\\
&= \frac{(\chi_2\mu_2-\chi_1\mu_1)}{\pi^{\frac{N}{2}}}\int_0^\infty\int_{\R^{N-1}}\frac{e^{-\lambda_2 s}e^{-|y|^2}}{\sqrt{s}}\left[ \int_0^\infty \tau e^{-\tau^2}u(x+2\sqrt{s}\tau e_i +2\sqrt{s}\pi_i^{-1}(y),t;u_0)d\tau \right]dyds\nonumber\\
& + \frac{\chi_1\mu_1}{\pi^{\frac{N}{2}}}\int_0^\infty\int_{\R^{N-1}}\frac{(e^{-\lambda_2 s}-e^{-\lambda_1 s})e^{-|y|^2}}{\sqrt{s}}\left[\int_0^\infty \tau e^{-\tau^2}u(x+2\sqrt{s}\tau e_i +2\sqrt{s} \pi_i^{-1}(y),t;u_0)d\tau\right]dyds\nonumber\\
&+ \frac{(\chi_1\mu_1-\chi_2\mu_2)}{\pi^{\frac{N}{2}}}\int_0^\infty\int_{\R^{N-1}}\frac{e^{-\lambda_2 s}e^{-|y|^2}}{\sqrt{s}}\left[ \int_0^\infty \tau e^{-\tau^2}u(x-2\sqrt{s}\tau e_i +2\sqrt{s} \pi_i^{-1}(y),t;u_0)d\tau \right]dyds\nonumber\\
& + \frac{\chi_1\mu_1}{\pi^{\frac{N}{2}}}\int_0^\infty\int_{\R^{N-1}}\frac{(e^{-\lambda_1 s}-e^{-\lambda_2 s})e^{-|y|^2}}{\sqrt{s}}\left[\int_0^\infty \tau e^{-\tau^2}u(x-2\sqrt{s}\tau e_i +2\sqrt{s} \pi_i^{-1}(y),t;u_0)d\tau\right]dyds.
\end{align}
Using the fact that $\int_0^\infty\frac{e^{-\lambda_k s}}{\sqrt{s}}ds=\frac{\sqrt{\pi}}{\sqrt{\lambda_k}}$, $\int_0^{\infty}\tau e^{-\tau^2}d\tau=\frac{1}{2}$, $\int_{\R^{N-1}}e^{-|y|^2}dy=\pi^{\frac{N-1}{2}}$, it follows from \eqref{asym-eq15} that for every $x\in\R^N,\ t\geq 0$, we have
\begin{align*}
|\partial_{x_i}(\chi_2v_2-\chi_1v_1)(x,t;u_0)| \leq \left[ \frac{|\chi_2\mu_2-\chi_1\mu_1|}{2\sqrt{\lambda_2}}+\frac{\chi_1\mu_1|\sqrt{\lambda_1}-\sqrt{\lambda_2}|}{2\sqrt{\lambda_1\lambda_2}}\right]\|u(\cdot,t;u_0)\|_{\infty}.
\end{align*}
Similarly, we have that
\begin{align*}
|\partial_{x_i}(\chi_1v_1-\chi_2v_2)(x,t;u_0)| \leq \left[ \frac{|\chi_1\mu_1-\chi_2\mu_2|}{2\sqrt{\lambda_1}}+\frac{\chi_2\mu_2|\sqrt{\lambda_2}-\sqrt{\lambda_1}|}{2\sqrt{\lambda_1\lambda_2}}\right]\|u(\cdot,t;u_0)\|_{\infty}.
\end{align*}
The lemma thus follows.
\end{proof}

\begin{lem}\label{Asym-lem3}
Suppose that \eqref{gl-exist-thm-eq2} holds. Let $u_0\in C_{\rm unif}^{b}(\R^N)$, $u_0\geq 0$, and $(u(\cdot,\cdot;u_0),v_{1}(\cdot,\cdot;u_0)$, $v_2(\cdot,\cdot;u_0))$ be the classical solution of \eqref{Main Intro-eq} with $u(\cdot,0;u_0)=u_0$. Then we have that
\begin{equation}\label{asym-eq17}
\limsup_{t\to\infty}\|u(\cdot,t;u_0)\|_{\infty}\leq \frac{a}{b+\chi_2\mu_2-\chi_1\mu_1-M},
\end{equation}
where $M$ is given by \eqref{gl-exist-thm-eq1}.
\end{lem}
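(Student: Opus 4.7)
The plan is to run essentially the same argument that produced the a priori upper bound in the proof of Theorem A, but with the constant bound $C_0$ replaced by $\overline{u}+\varepsilon$, where $\overline{u}:=\limsup_{t\to\infty}\|u(\cdot,t;u_0)\|_{\infty}$. The two key ingredients are already in place: the reformulation \eqref{global-exist-eq002} of the $u$-equation, and the two integral estimates \eqref{global-exist-eq005}--\eqref{global-exist-eq006} which together yield
\begin{equation*}
(\chi_2\lambda_2 v_2-\chi_1\lambda_1 v_1)(x,t;u_0)\leq M\,\|u(\cdot,t;u_0)\|_{\infty}, \qquad \forall\, x\in\R^N,\ t\geq 0,
\end{equation*}
with $M$ as in \eqref{gl-exist-thm-eq1}. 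Note that by Theorem A (applicable thanks to \eqref{gl-exist-thm-eq2}), the solution is global and bounded, so $\overline{u}<\infty$.

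First, I would fix $\varepsilon>0$ and choose $T_\varepsilon>0$ such that $\|u(\cdot,t;u_0)\|_{\infty}\leq \overline{u}+\varepsilon$ for all $t\geq T_\varepsilon$. Applying the estimates \eqref{global-exist-eq005}--\eqref{global-exist-eq006} to $u(\cdot,t;u_0)$ for $t\geq T_\varepsilon$ gives
\begin{equation*}
(\chi_2\lambda_2 v_2-\chi_1\lambda_1 v_1)(x,t;u_0)\leq M(\overline{u}+\varepsilon),\qquad \forall\, x\in\R^N,\ t\geq T_\varepsilon.
\end{equation*}
Substituting this into \eqref{global-exist-eq002} and using that $u\geq 0$, we obtain the differential inequality
\begin{equation*}
u_t\leq \Delta u+\nabla(\chi_2 v_2-\chi_1 v_1)\nabla u+u\Big(a+M(\overline{u}+\varepsilon)-(b+\chi_2\mu_2-\chi_1\mu_1)u\Big)
\end{equation*}
on $\R^N\times (T_\varepsilon,\infty)$.

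Second, I would compare $u(\cdot,\cdot;u_0)$ on $[T_\varepsilon,\infty)$ with the solution $U_\varepsilon(t)$ of the spatially-independent logistic ODE
\begin{equation*}
U_\varepsilon'=U_\varepsilon\Big(a+M(\overline{u}+\varepsilon)-(b+\chi_2\mu_2-\chi_1\mu_1)U_\varepsilon\Big),\quad U_\varepsilon(T_\varepsilon)=\|u(\cdot,T_\varepsilon;u_0)\|_{\infty}.
\end{equation*}
The parabolic comparison principle (exactly as used in the proof of Theorem A, the drift term $\nabla(\chi_2 v_2-\chi_1 v_1)$ causing no difficulty since $U_\varepsilon$ is constant in $x$) yields $u(x,t;u_0)\leq U_\varepsilon(t)$ for $x\in\R^N$ and $t\geq T_\varepsilon$. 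Since $b+\chi_2\mu_2-\chi_1\mu_1>0$ by \eqref{gl-exist-thm-eq2}, the ODE has the globally attracting positive equilibrium $(a+M(\overline{u}+\varepsilon))/(b+\chi_2\mu_2-\chi_1\mu_1)$, so $\lim_{t\to\infty}U_\varepsilon(t)$ equals this value. Taking limsup in $t$ in the inequality $\|u(\cdot,t;u_0)\|_\infty\leq U_\varepsilon(t)$ then yields
\begin{equation*}
\overline{u}\leq \frac{a+M(\overline{u}+\varepsilon)}{b+\chi_2\mu_2-\chi_1\mu_1}.
\end{equation*}

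Finally, sending $\varepsilon\to 0^+$ and rearranging produces $(b+\chi_2\mu_2-\chi_1\mu_1-M)\overline{u}\leq a$, and the hypothesis \eqref{gl-exist-thm-eq2} (which is exactly $b+\chi_2\mu_2-\chi_1\mu_1-M>0$) allows us to divide to obtain \eqref{asym-eq17}. I do not anticipate a real obstacle here: every step is a routine adaptation of techniques already developed for Theorem A and used in the Case I portion of the proof of Theorem B, the only point requiring attention being the verification that the comparison principle applies in the presence of the drift term $\nabla(\chi_2 v_2-\chi_1 v_1)\nabla u$, which is standard since this coefficient is bounded on $\R^N\times[T_\varepsilon,T]$ for any finite $T$ by Lemma \ref{asym-lem2} and the boundedness of $u$.
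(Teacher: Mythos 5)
Your proposal is correct and follows essentially the same route as the paper: both arguments rest on the estimate $(\chi_2\lambda_2 v_2-\chi_1\lambda_1 v_1)\le M\cdot(\text{bound on }u)$ and then compare with the logistic ODE. The only cosmetic difference is that the paper reuses the intermediate inequalities \eqref{asym-eq4} and \eqref{asym-eq8} (which carry an $\underline{u}$ term that is simply dropped as nonpositive), whereas you re-derive the simpler one-sided bound directly from \eqref{global-exist-eq005}--\eqref{global-exist-eq006}; the mathematical content is identical.
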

\begin{proof}
It follows from inequalities \eqref{asym-eq4} and \eqref{asym-eq8} that
$$
\overline{u}\leq \frac{a+\frac{1}{\lambda_2}\left[((\chi_2\mu_2\lambda_2-\chi_1\mu_1\lambda_1)_{+}+\chi_1\mu_1(\lambda_1-\lambda_2)_{+} ) \right]\overline{u}}{b+\chi_2\mu_2-\chi_1\mu_1}$$
and
$$\overline{u}\leq \frac{a+\frac{1}{\lambda_1}\left[((\chi_2\mu_2\lambda_2-\chi_1\mu_1\lambda_1)_{+}+\chi_2\mu_2(\lambda_1-\lambda_2)_{+} ) \right]\overline{u}}{b+\chi_2\mu_2-\chi_1\mu_1}.
$$ Which is equivalent to
$$
(b+\chi_2\mu_2-\chi_1\mu_1)\overline{u}\leq a+\frac{1}{\lambda_2}\left[((\chi_2\mu_2\lambda_2-\chi_1\mu_1\lambda_1)_{+}+\chi_1\mu_1(\lambda_1-\lambda_2)_{+} ) \right]\overline{u}$$
and
$$(b+\chi_2\mu_2-\chi_1\mu_1)\overline{u}\leq a+\frac{1}{\lambda_1}\left[((\chi_2\mu_2\lambda_2-\chi_1\mu_1\lambda_1)_{+}+\chi_2\mu_2(\lambda_1-\lambda_2)_{+} ) \right]\overline{u}.
$$
Hence
$$(b+\chi_2\mu_2-\chi_1\mu_1)\overline{u}\leq a+M\overline{u}.$$
The lemma thus follows.
\end{proof}

\medskip

\begin{lem}\label{asym-lem4}
\begin{description}
\item[1)] If there is a positive constant $c^*_{-}(\chi_1,\mu_1,\lambda_2,\chi_2,\mu_2,\lambda_2)$ such that
\begin{equation}
\lim_{t\to\infty}\sup_{|x|\leq ct}|u(x,t;u_0)-\frac{a}{b}|=0\quad \forall\ 0\leq c< c^*_{-}(\chi_1,\mu_1,\lambda_2,\chi_2,\mu_2,\lambda_2),
\end{equation}
then for every $i=1,2$ we have
\begin{equation}
\lim_{t\to\infty}\sup_{|x|\leq ct}|\lambda_i v_i(x,t;u_0)-\frac{a}{b}\mu_i|=0\quad \forall\ 0\leq c< c^*_{-}(\chi_1,\mu_1,\lambda_2,\chi_2,\mu_2,\lambda_2).
\end{equation}
\item[2)] If there is a positive constant $ c^*_{+}(\chi_1,\mu_1,\lambda_2,\chi_2,\mu_2,\lambda_2)$ such that
\begin{equation}
\lim_{t\to\infty}\sup_{|x|\geq ct}u(x,t;u_0)=0 \quad \forall\ c> c^*_{+}(\chi_1,\mu_1,\lambda_2,\chi_2,\mu_2,\lambda_2),
\end{equation}
then for each $i=1,2$ we have that
\begin{equation}
\lim_{t\to\infty}\sup_{|x|\geq ct}v_i(x,t;u_0)=0 \quad \forall\ c> c^*_{+}(\chi_1,\mu_1,\lambda_2,\chi_2,\mu_2,\lambda_2).
\end{equation}
\end{description}
\end{lem}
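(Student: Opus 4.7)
The plan is to exploit the explicit convolution representation of $v_i$ in terms of $u$. As already observed in \eqref{global-exist-eq003}, solving the elliptic equations in \eqref{Main Intro-eq} yields
\begin{equation*}
v_i(x,t;u_0) = \mu_i \int_0^\infty \int_{\R^N} \frac{e^{-\lambda_i s}}{(4\pi s)^{N/2}} e^{-|z-x|^2/(4s)} u(z,t;u_0)\, dz\, ds.
\end{equation*}
Setting $K_i(y,s) := \frac{\lambda_i}{(4\pi s)^{N/2}} e^{-\lambda_i s - |y|^2/(4s)}$, a direct computation shows that $\int_0^\infty \int_{\R^N} K_i(y,s)\, dy\, ds = 1$, and hence
\begin{equation*}
\lambda_i v_i(x,t;u_0) - \mu_i \frac{a}{b} = \mu_i \int_0^\infty \int_{\R^N} K_i(z-x,s) \Big[u(z,t;u_0) - \frac{a}{b}\Big]\, dz\, ds.
\end{equation*}
Thus $\lambda_i v_i(x,t;u_0)$ is a probability-weighted average of $u(\cdot,t;u_0)$ centered at $x$, and both statements reduce to showing that such averages inherit the respective spatial convergence/decay properties of $u$.

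For part 1, fix $c \in [0, c^*_-)$, pick $c_1 \in (c, c^*_-)$, and set $\delta := c_1 - c > 0$. Given $\varepsilon > 0$, the hypothesis furnishes $T$ so that $|u(z,t;u_0) - a/b| < \varepsilon$ whenever $t \geq T$ and $|z| \leq c_1 t$. For $|x| \leq ct$, the triangle inequality forces $|z-x| \geq \delta t$ whenever $|z| \geq c_1 t$. Splitting the integral accordingly and using the global bound $\|u(\cdot,t;u_0)\|_\infty \leq C$ supplied by Theorem A, I obtain
\begin{equation*}
\Big|\lambda_i v_i(x,t;u_0) - \mu_i \frac{a}{b}\Big| \leq \mu_i \varepsilon + \mu_i \Big(C + \frac{a}{b}\Big) \int_0^\infty \int_{|y| \geq \delta t} K_i(y,s)\, dy\, ds.
\end{equation*}
To control the tail I use the inequality $e^{-|y|^2/(4s)} \leq e^{-\delta^2 t^2/(8s)} e^{-|y|^2/(8s)}$ on $|y| \geq \delta t$, which after integrating the Gaussian factor yields
\begin{equation*}
\int_0^\infty \int_{|y| \geq \delta t} K_i(y,s)\, dy\, ds \leq 2^{N/2} \int_0^\infty \lambda_i e^{-\lambda_i s} e^{-\delta^2 t^2/(8s)}\, ds.
\end{equation*}
Splitting the $s$-integral at $s = t$ gives the clean bound $2^{N/2}(e^{-\delta^2 t/8} + e^{-\lambda_i t})$, uniform in $x$ and vanishing as $t \to \infty$. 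Sending $t \to \infty$ and then $\varepsilon \to 0^+$ finishes part 1. Part 2 is entirely symmetric: fix $c > c^*_+$, pick $c_1 \in (c^*_+, c)$, set $\delta := c - c_1$; use the smallness of $u$ on $|z| \geq c_1 t$ and the fact that $|z-x| \geq \delta t$ whenever $|x| \geq ct$ and $|z| < c_1 t$, then apply the same tail estimate to conclude $\sup_{|x|\geq ct} v_i(x,t;u_0) \to 0$.

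The only non-routine point is the uniform-in-$x$ vanishing of the kernel tail $\int_0^\infty\!\int_{|y| \geq \delta t} K_i(y,s)\, dy\, ds$ as $t\to\infty$. The ``split at $s=t$'' device handles this cleanly, since for $s \leq t$ the Gaussian factor supplies decay of order $e^{-\delta^2 t/8}$ while for $s \geq t$ the factor $e^{-\lambda_i s}$ contributes $e^{-\lambda_i t}$. Beyond this quantitative estimate, the proof is essentially a dominated-convergence argument; global boundedness of $u$ comes from Theorem A, so no additional ingredient is required.
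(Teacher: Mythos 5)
Your proof is correct and complete. The paper itself does not prove the lemma but refers to the proof of Lemma 5.5 in \cite{SaSh1}; that argument likewise rests on the convolution representation \eqref{global-exist-eq003}, so your approach is essentially the standard one, carried out explicitly. The key points you handle carefully are precisely the ones that matter: (a) normalizing the kernel so that $\lambda_i v_i - \mu_i a/b$ is a probability average of $u - a/b$; (b) the triangle-inequality separation $|z-x|\ge \delta t$ between the cone $|x|\le ct$ (resp.\ $|x|\ge ct$) and the complementary region for $z$; and (c) the uniform-in-$x$ tail bound for $\int_0^\infty\!\int_{|y|\ge \delta t} K_i$, which your split at $s=t$ resolves cleanly, giving the quantitative rate $2^{N/2}(e^{-\delta^2 t/8}+e^{-\lambda_i t})$. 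The global bound $\|u(\cdot,t;u_0)\|_\infty\le C$ needed to control $u$ outside the favorable region is indeed available from Theorem A under the hypotheses of Theorems C and D, so no ingredient is missing. One could shorten the tail estimate slightly by noting that $\int_0^\infty\!\int_{|y|\ge R}K_i\,dy\,ds$ is a fixed decreasing function of $R$ tending to $0$ (dominated convergence), without needing an explicit rate; your explicit bound is a harmless strengthening.
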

 The proof of Lemma \ref{asym-lem4} follows from the proof of Lemma 5.5 \cite{SaSh1}.

Now, we are ready to prove Theorem C.

\begin{proof}[Proof of Theorem C]

Combining inequalities \eqref{asym-eq16} and \eqref{asym-eq17}, we obtain that
\begin{equation}\label{asym-eq18}
\|\nabla(\chi_2v_2-\chi_1v_1)(\cdot,t;u_0)\|_{\infty}\leq \frac{aD\sqrt{N}}{b+\chi_2\mu_2-\chi_1\mu_1-M} +D\varepsilon\sqrt{N}, \quad \forall\ t\geq T_\varepsilon
\end{equation}
where $D$ is given by \eqref{asym-hyp1} and $M$ given by \eqref{gl-exist-thm-eq1}. Let
$$ K_{\varepsilon}:= \sup_{0\leq t\leq T_\varepsilon}\|\nabla (\chi_2 v_2-\chi_1 v_1)(\cdot,t;u_0)\|_{\infty} \quad \text{and}\quad K_{\varepsilon,2}=:\sup_{0\leq t\leq T_\varepsilon}\|\frac{\chi_2\lambda_2}{\sqrt{a	}}v_{2}(\cdot,t;u_0)\|_{\infty}.
$$
Choose $C>0$ such that $$u_{0}(x)\leq C e^{-\sqrt{a}|x|},\quad \forall \ x\in\R^N.$$
Let $\xi\in S^{N-1}$ be given and  consider
$$ \overline{U}(x,t;\xi)= Ce^{-\sqrt{a}(x\cdot\xi -( 2\sqrt{a} +K_\varepsilon + K_{\varepsilon,2})t)}.$$
We have that
\begin{align}
&\overline{U}_t-\Delta \overline{U}-\nabla((\chi_2v_2-\chi_1v_1)(\cdot,\cdot;u_0))\nabla\overline{U}-(a+(\chi_2\lambda_2v_2-\chi_1\lambda_1v_1)(\cdot,\cdot;u_0)-(b+\chi_2\mu_2-\chi_1\mu_1)\overline{U})\overline{U}\nonumber\\
& =\Big(\sqrt{a}(2\sqrt{a}+K_{\varepsilon}+K_{\varepsilon,2})-a+\sqrt{a}\nabla((\chi_2v_2-\chi_1v_1)(\cdot,\cdot;u_0))\cdot\xi\Big)\overline{U}\nonumber\\
 &\qquad - \Big(a +(\chi_2\lambda_2v_2-\chi_1\lambda_1v_1)(\cdot,\cdot;u_0)-(b+\chi_2\mu_2-\chi_1\mu_1)\overline{U} \Big)\overline{U}\nonumber\\
& = \Big( \sqrt{a}\big(K_\varepsilon+\nabla((\chi_2v_2-\chi_1v_1)(\cdot,\cdot;u_0))\cdot\xi\big) +(\sqrt{a}K_{\varepsilon,2}-\chi_2\lambda_2v_2(\cdot,\cdot;u_0))\Big)\overline{U}\nonumber\\
&\ \ +\Big(\chi_1\lambda_1v_1(\cdot,\cdot;u_0) +(b+\chi_2\mu_2-\chi_1\mu_1)\overline{U}\Big)\overline{U}\nonumber\\
& \geq 0 \quad \forall x\in\R^N, \ 0<t\leq T_{\varepsilon},\ \forall\ \xi\in S^{N-1}.
\end{align}
Since $\overline{U}(x,0;\xi)=Ce^{-\sqrt{a}x\cdot\xi}\geq Ce^{-\sqrt{a}|x|}\geq u_0(x)$, by comparison principle for parabolic equations, we obtain that
\begin{equation}
u(x,t;u_0)\leq \overline{U}(x,t;\xi),\quad \forall\ x\in\R^N,\ 0\leq t\leq T_\varepsilon \ \forall\ \xi\in S^{N-1}.
\end{equation}
Next, consider
\begin{equation}
\overline{W}(x,t;\xi)=Ce^{-\sqrt{a}(x\cdot\xi-(2\sqrt{a}+ L_\varepsilon+L_{\varepsilon,2})(t-T_\varepsilon))}e^{\sqrt{a}(2\sqrt{a}+K_\varepsilon+K_{\varepsilon,2})T_{\varepsilon}}, \quad x\in\R^N, \ t\geq T_{\varepsilon},
\end{equation}
where
$$
L_{\varepsilon}:=\frac{aD\sqrt{N}}{b+\chi_2\mu_2-\chi_1\mu_1-M}+D\varepsilon\sqrt{N}$$
and
 $$ L_{\varepsilon,2}:=\frac{\sqrt{a}\chi_2\mu_2}{b+\chi_2\mu_2-\chi_1\mu_1-M} +\frac{
 \chi_2\mu_2\varepsilon}{\sqrt{a}}.
$$
It follows from \eqref{asym-eq03}, \eqref{asym-eq17} and \eqref{asym-eq18} that for any $x\in\R^N$ and $t\ge T_\epsilon$,
$$
\overline{W}_t-\Delta \overline{W}-\nabla((\chi_2v_2-\chi_1v_1)(\cdot,\cdot;u_0))\nabla\overline{W}-(a+(\chi_2\lambda_2v_2-\chi_1\lambda_1v_1)(\cdot,\cdot;u_0)-(b+\chi_2\mu_2-\chi_1\mu_1)\overline{W})\overline{W}\geq 0.
$$
Observe that $\overline{W}(\cdot,T_\varepsilon;\xi)=\overline{U}(\cdot,T_\varepsilon;\xi)\geq u(\cdot,T_\varepsilon)$. Hence, comparison principle for parabolic equations implies that
\begin{equation}
0\leq u(x,t;u_0)\leq \overline{W}(x,t;\xi),\quad x\in\R^N,\ t\geq T_{\varepsilon},\ \xi\in S^{N-1}.
\end{equation}
Hence, for every $c>2\sqrt{a}+L_{\varepsilon}+L_{\varepsilon,2}$, and $t>T_\varepsilon$,  we have
$$
\sup_{|x|\geq ct}u(x,t;u_0)\leq \sup_{|x|\geq ct }\overline{W}(x,t,\frac{1}{|x|}x)\leq \sup_{|x|\geq ct}Me^{-\sqrt{a}(c-(2\sqrt{a}+ L_\varepsilon+L_{\varepsilon,2})(t-T_\varepsilon))}e^{\sqrt{a}(2\sqrt{a}+K_\varepsilon+K_{\varepsilon,2})T_{\varepsilon}} \to 0
$$
as $ t\to \infty$.
Thus by taking
\begin{equation}\label{asym-eq19}
c_{+}^{*}(\chi_1,\mu_1,\lambda_1,\chi_2,\mu_2,\lambda_2):=2\sqrt{a}+\lim_{\varepsilon\to 0^+}(L_{\varepsilon}+L_{\varepsilon,2})=2\sqrt{a}+\frac{\sqrt{a}(D\sqrt{N a}+\chi_2\mu_2)}{b+\chi_2\mu_2-\chi_1\mu_1-M},
\end{equation}
and using Lemma \ref{asym-lem4}, the result of Theorem C follows.
\end{proof}

In order to prove Theorem D, we first establish the following  important Lemma.

\begin{lem}\label{asym-lem5}
Let $L$ be given by \eqref{L-eq}.  Then,
\begin{align}\label{asym-eq20}
&\lim_{ R\to\infty} \inf_{ |x| \geq R,\ T\geq R}\left( 4(a+(\chi_2\lambda_2v_2-\chi_1\lambda_1v_1)(x,t;u_0)-|\nabla(\chi_2v_2-\chi_1v_1)(x,t;u_0)|^2\right)\nonumber\\
&\geq 4a(1-L)-\frac{Na^2D^2}{(b+\chi_2\mu_2-\chi_1\mu_1)^2} .
\end{align}
\end{lem}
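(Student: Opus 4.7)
The plan is to assemble the lower bound from three pieces that are already in hand: the gradient estimate of Lemma \ref{asym-lem2}, the asymptotic $L^\infty$ bound on $u$ from Lemma \ref{Asym-lem3}, and pointwise lower bounds on $\chi_2\lambda_2 v_2-\chi_1\lambda_1 v_1$ obtained by redoing the computations in \eqref{asym-eq1}--\eqref{asym-eq2} and \eqref{asym-eq8_1} that were used in the proof of Theorem B, retaining only the negative-part contributions.

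First I would handle the gradient square. Lemma \ref{asym-lem2} gives $|\partial_{x_i}(\chi_2v_2-\chi_1v_1)(x,t;u_0)|\le D\,\|u(\cdot,t;u_0)\|_\infty$ for each coordinate $i$, so
\[
|\nabla(\chi_2v_2-\chi_1v_1)(x,t;u_0)|^{2}\le N D^{2}\|u(\cdot,t;u_0)\|_\infty^{2}.
\]
Combined with Lemma \ref{Asym-lem3}, for every $\varepsilon>0$ there is $R_\varepsilon>0$ such that $\|u(\cdot,t;u_0)\|_\infty\le \frac{a}{b+\chi_2\mu_2-\chi_1\mu_1-M}+\varepsilon$ for all $t\ge R_\varepsilon$, and hence
\[
|\nabla(\chi_2v_2-\chi_1v_1)(x,t;u_0)|^{2}\le \frac{Na^{2}D^{2}}{(b+\chi_2\mu_2-\chi_1\mu_1-M)^{2}}+C_1\varepsilon,\qquad t\ge R_\varepsilon,
\]
for some constant $C_1$ independent of $\varepsilon$.

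Next I would bound $\chi_2\lambda_2 v_2-\chi_1\lambda_1 v_1$ from below. Following the two groupings used in \eqref{asym-eq1} and \eqref{asym-eq8_1}, and discarding the positive-coefficient terms (which contribute a nonnegative amount since $u\ge 0$), one obtains
\[
(\chi_2\lambda_2 v_2-\chi_1\lambda_1 v_1)(x,t;u_0)\ge -\tfrac{1}{\lambda_2}\bigl[(\chi_2\mu_2\lambda_2-\chi_1\mu_1\lambda_1)_{-}+\chi_1\mu_1(\lambda_1-\lambda_2)_{-}\bigr]\|u(\cdot,t;u_0)\|_\infty
\]
and the analogous estimate with $\lambda_1$ in the denominator and $\chi_2\mu_2(\lambda_1-\lambda_2)_-$ in the bracket. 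Taking the larger (better) of the two lower bounds and inserting the asymptotic bound from Lemma \ref{Asym-lem3}, the coefficient of $a$ collapses exactly to $L$ as defined in \eqref{L-eq}, giving
\[
(\chi_2\lambda_2 v_2-\chi_1\lambda_1 v_1)(x,t;u_0)\ge -La-C_2\varepsilon,\qquad t\ge R_\varepsilon.
\]

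Putting the two estimates together for $t\ge R_\varepsilon$,
\[
4\bigl(a+(\chi_2\lambda_2 v_2-\chi_1\lambda_1 v_1)(x,t;u_0)\bigr)-|\nabla(\chi_2 v_2-\chi_1 v_1)(x,t;u_0)|^{2}\ge 4a(1-L)-\frac{Na^{2}D^{2}}{(b+\chi_2\mu_2-\chi_1\mu_1-M)^{2}}-C_3\varepsilon.
\]
Since the left-hand side does not depend on $x$ through the $L^\infty$ bounds, the infimum over $|x|\ge R$, $t\ge R$ is controlled by the same estimate once $R\ge R_\varepsilon$; letting $R\to\infty$ (so that $\varepsilon$ may be taken to $0$) yields the claim. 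The main obstacle is purely organizational: carrying the two alternative integral representations in parallel and verifying that the minimum in the definition of $L$ indeed corresponds to the maximum (sharpest) of the two available lower bounds, so that the constants line up exactly with \eqref{L-eq}. No new PDE estimate is required beyond those already established.
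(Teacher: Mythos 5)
Your proof follows the same overall strategy as the paper's: square the gradient bound of Lemma \ref{asym-lem2} against the asymptotic $L^\infty$ bound of Lemma \ref{Asym-lem3} to control $|\nabla(\chi_2 v_2-\chi_1 v_1)|^2$, bound $\chi_2\lambda_2 v_2-\chi_1\lambda_1 v_1$ from below using the two integral decompositions (the groupings in \eqref{asym-eq1} and \eqref{asym-eq8_1}), and take the sharper of the two resulting estimates, which is exactly where the $\min$ in the definition of $L$ enters. The one departure is organizational and slightly cleaner: you discard the nonnegative parts of each decomposition outright because $u\ge 0$, whereas the paper retains them multiplied by $\underline{u}-\varepsilon$ and then invokes Theorem C to conclude $\underline u=0$ before dropping them; both routes produce the identical estimate. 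One detail worth flagging: the bound you derive (and the one the paper's proof actually establishes) has $(b+\chi_2\mu_2-\chi_1\mu_1-M)^2$ in the denominator, consistent with \eqref{asym-hyp2} and the definition of $c^*_-$ in Theorem D, whereas the lemma's displayed inequality \eqref{asym-eq20} omits the $-M$; that appears to be a typographical slip in the statement, and the version you obtain is the correct (and tighter) one.
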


\begin{proof}Using inequalities \eqref{asym-eq2} and \eqref{asym-eq16}, we have that for every $t\geq T_{\varepsilon}, \ x\in\R^N$,
\begin{align}\label{asym-eq21}
&4\big(a+(\chi_2\lambda_2v_2-\chi_1\lambda_1v_1)(x,t;u_0)\big) -|\nabla(\chi_2v_2(x,t;u_0)-\chi_1v_1(x,t;u_0))|^2\nonumber\\
&\geq 4\big(a+ \frac{1}{\lambda_2}\left[(\chi_2\mu_2\lambda_2-\chi_1\mu_1\lambda_1)_{+}+\chi_1\mu_1(\lambda_1-\lambda_2)_{+}\right](\underline{u}-\varepsilon)\big)\nonumber\\
&-\frac{4}{\lambda_2}\left[(\chi_2\mu_2\lambda_2-\chi_1\mu_1\lambda_1)_{-}+\chi_1\mu_1(\lambda_1-\lambda_2)_{-}\right](\overline{u}+\varepsilon)-ND^{2}(\overline{u}+\varepsilon)^2.
\end{align}
Letting first $R\to \infty$ and then $\varepsilon\to 0$, it follows from \eqref{asym-eq21} that
\begin{align}\label{asym-eq22}
&\lim_{ R\to\infty} \inf_{ |x| \geq R,\ T\geq R}\left( 4(a+(\chi_2\lambda_2v_2-\chi_1v_1\lambda_1)(x,t;u_0)-|\nabla(\chi_2v_2-\chi_1v_1)(x,t;u_0)|^2\right)\nonumber \\
&\geq 4\big(a+ \frac{1}{\lambda_2}\left[(\chi_2\mu_2\lambda_2-\chi_1\mu_1\lambda_1)_{+}+\chi_1\mu_1(\lambda_1-\lambda_2)_{+}\right]\underline{u}\big)\nonumber\\
&-\frac{4}{\lambda_2}\left[(\chi_2\mu_2\lambda_2-\chi_1\mu_1\lambda_1)_{-}+\chi_1\mu_1(\lambda_1-\lambda_2)_{-}\right]\overline{u}-ND^{2}\overline{u}^2.
\end{align}
But Theorem C implies  that $\underline{u}=0$. Hence, inequality \eqref{asym-eq22} implies that
\begin{align}\label{asym-eq23}
& \lim_{ R\to\infty} \inf_{ |x| \geq R,\ T\geq R}\left( 4(a+(\chi_2\lambda_2v_2-\chi_1v_1\lambda_1)(x,t;u_0)-|\nabla(\chi_2v_2-\chi_1v_1)(x,t;u_0)|^2\right)\nonumber \\
&\geq 4\big(a-\frac{1}{\lambda_2}\left[(\chi_2\mu_2\lambda_2-\chi_1\mu_1\lambda_1)_{-}+\chi_1\mu_1(\lambda_1-\lambda_2)_{-}\right]\overline{u}\big)-ND^{2}\overline{u}^2.
\end{align}
Thus, it follows from  \eqref{asym-eq23} and \eqref{asym-eq17} that
\begin{align}\label{asym-eq24}
& \lim_{ R\to\infty} \inf_{ |x| \geq R,\ T\geq R}\left( 4(a+(\chi_2\lambda_2v_2-\chi_1v_1\lambda_1)(x,t;u_0)-|\nabla(\chi_2v_2-\chi_1v_1)(x,t;u_0)|^2\right)\nonumber \\
&\geq 4\big(a-\frac{a\left[(\chi_2\mu_2\lambda_2-\chi_1\mu_1\lambda_1)_{-}+\chi_1\mu_1(\lambda_1-\lambda_2)_{-}\right]}{\lambda_2(b+\chi_2\mu_2-\chi_1\mu_1-M)}\big)-\frac{ND^{2}a^2}{(b+\chi_2\mu_2-\chi_1\mu_1-M)^2}.
\end{align}
Similarly, by rewriting $(\chi_2\mu_2v_2-\chi_1\mu_1v_1)(x,t;u_0)$ in the form given by \eqref{asym-eq8_1}, same arguments as above yield that
\begin{align}\label{asym-eq25}
& \lim_{ R\to\infty} \inf_{ |x| \geq R,\ T\geq R}\left( 4(a+(\chi_2\lambda_2v_2-\chi_1v_1\lambda_1)(x,t;u_0)-|\nabla(\chi_2v_2-\chi_1v_1)(x,t;u_0)|^2\right)\nonumber \\
&\geq 4\big(a-\frac{a\left[(\chi_2\mu_2\lambda_2-\chi_1\mu_1\lambda_1)_{-}+\chi_2\mu_2(\lambda_1-\lambda_2)_{-}\right]}{\lambda_1(b+\chi_2\mu_2-\chi_1\mu_1-M)}\big)-\frac{ND^{2}a^2}{(b+\chi_2\mu_2-\chi_1\mu_1-M)^2}.
\end{align}
The Lemma thus follows.
\end{proof}

\begin{proof}[Proof of Theorem D]
 The arguments used in this proof generalize some of the arguments used in the proof of Theorem 9(i) \cite{SaSh1}. Hence some details might be omitted. We refer the reader to \cite{SaSh1} for the proofs of the estimates stated below.

Since \eqref{asym-hyp2} holds,  we have
\begin{align*}
c^{*}_{-}(\chi_1,\mu_1,\lambda_1,\chi_2,\mu_2,\lambda_2):=2\sqrt{a(1-L)}-\frac{aD\sqrt{N}}{b+\chi_2\mu_2-\chi_1\mu_1-M}>0,
\end{align*}
where $M$, $D$ and $L$ are given by \eqref{gl-exist-thm-eq1} and \eqref{asym-hyp1} and \eqref{L-eq} respectively. We first note that, it follows from Lemma \ref{asym-lem5} and the proof of Lemma 5.4 \cite{SaSh1} that  for every $0\leq c< c^*_{-}(\chi_1,\mu_1,\lambda_1,\chi_2,\mu_2,\lambda_2)$ we have
 \begin{equation}\label{asym-eq26}
 \liminf_{t\to \infty}\inf_{|x|\leq ct}u(x,t;u_0)>0.
 \end{equation}
 It suffices to prove the following claim.

 \smallskip

\noindent {\bf Claim. }  For every $0\leq c<c^*_{-}(\chi_1,\mu_1,\lambda_1,\chi_2,\mu_2,\lambda_2)$, we have that
\begin{equation}\label{asym-eq27}
\lim_{t\to\infty}\sup_{|x|\leq ct}|u(x,t;u_{0})-\frac{a}{b} |=0.
\end{equation}

Suppose that the claim is not true. Then there is  $0\leq c<c^*_{-}(\chi_1,\mu_1,\lambda_1,\chi_2,\mu_2,\lambda_2)$, $\delta>0$, a sequence $\{x_n\}_{n\geq 1},$ a sequence of positive numbers $\{t_n\}_{n\geq 1}$ with $t_n\to\infty$ as $n\to\infty$ such that \begin{equation}\label{asym-eq28}
|x_n|\leq ct_{n},\quad\forall\ n\geq 1,
\end{equation}
and
\begin{equation}\label{asym-eq29}
|u(x_n,t_n;u_0)-\frac{a}{b}|\geq \delta, \qquad  \forall\ n\geq 1.
\end{equation}
For every $n\geq 1$, let us define
\begin{equation}\label{asym-eq30}
u_n(x,t)=u(x+x_n,t+t_n;u_0),\, \,\,\, v_{k,n}(x,t)=v_k(x+x_n,t_n;u_0)\,\,\, (k=1,2)
\end{equation}
for all  $x\in\R^N$ and $t\geq-t_n$.

We first show that there is a subsequence of $\{(u_n,v_{1n},v_{2n})\}$ which converges locally uniformly. To this end,
let $\{T(t)\}_{t\geq 0}$ denote the analytic semigroup generated by the closed linear operator $(\Delta-I)u$ on $C^{b}_{\rm unif}(\R^N)$. Then the variation of constant formula yield that
\begin{align}\label{asym-eq31}
u(x,t;u_0)=&T(t)u_0+\int_0^t T(t-s)\nabla\cdot((\chi_2u\nabla v_2-\chi_1 u\nabla v_1)(\cdot,s;u_0))ds\nonumber\\
&\qquad\qquad  +\int_0^t  T(t-s)(((a+1)u-bu^2)(\cdot,s;u_0))ds.
\end{align}
Let $0<\alpha<\frac{1}{2}$ be fixed and let $X^{\alpha}$ denotes the fractional powers associated to the semigroup $\{T(t)\}_{t\geq0}$. Thus, there is a constant $C_{\alpha}$(see \cite{Dan Henry}) depending only on $\alpha$ and the dimension $N$ such that
\begin{eqnarray}\label{asym-eq32}
\|u_{n}(\cdot,0)\|_{X^{\alpha}}& \leq C_{\alpha}t_n^{-\alpha}\|u_0\|_{\infty} +C_{\alpha}\int_0^{t_n}e^{-(t_n-s)}(t_n-s)^{-\frac{1}{2}-\alpha}\|(\chi_2u\nabla v_{2}-\chi_1u\nabla v_1)(\cdot,s;u_0)\|_{\infty}ds\nonumber\\
& + C_{\alpha}\int_{0}^{t_n}e^{-(t_n-s)}(t_n-s)^{-\alpha}\|( (a+1)u-bu^2)(\cdot,s;u_0)\|_{\infty}ds.
\end{eqnarray}
Using the facts that $\sup_{t\geq 0}\|u(\cdot,t)\|_{\infty}<\infty$,  $t_n\to \infty$ as $n\to\infty$, $\int_0^{\infty}e^{-\tau}\tau^{-\frac{1}{2}-\alpha}d\tau=\Gamma(\frac{1}{2}-\alpha)<\infty$ and $\int_{0}^\infty e^{-\tau}\tau^{-\alpha}d\tau=\Gamma(1-\alpha)<\infty$, it follows from \eqref{asym-eq32} that
\begin{equation}
\sup_{n\geq 1}\|u_{n}(\cdot,0)\|_{X^\alpha}<\infty.
\end{equation}
Similar arguments as those used in the proof of Theorem 1.1 \cite{SaSh1} yield that the functions $u_n : [-T\ , \ T]\to X^{\alpha} $ are equicontinuous for every $T>0$. Hence Arzela-Ascili's Theorem and Theorem 15 (page 80 of \cite{Friedman}) imply that there is a function $(\tilde{u},\tilde{v}_1,\tilde{v}_2)\in \left[C^{2,1}(\R^N\times\R)\right]^3$ and a subsequence $\{(u_{n'},v_{1,n'},v_{2,n'})\}_{n\geq 1}$ of $\{(u_{n},v_{1,n},v_{2,n})\}_{n\geq 1}$ such that $(u_{n'},v_{1,n'},v_{2,n'})\to (\tilde{u},\tilde{v}_{1},\tilde{v}_{2})$ in $C_{loc}^{1+\delta',\delta'}(\R^N\times\R) $ for some $\delta'>0$. Moreover $\mu_i\tilde{u}=(\lambda_i I-\Delta)\tilde{v}_{i}$  for every $i=1,2$. Note that
$$
\tilde{u}(x,t)=\lim_{n\to\infty}u(x+x_{n'},t+t_{n'};u_0),\quad \forall\ x\in\R^N,\ \ t\in\R.
$$
Hence
\begin{equation}\label{asym-eq33}
|\tilde{u}(0,0)-\frac{a}{b}|\geq \delta.
\end{equation} Choose $\tilde{c}\in(c\ ,\ c^*_{-}(\chi_1,\mu_1,\lambda_1,\chi_2,\mu_2,\lambda_2))$. For every $x\in\R^N, t\in\R$ and $t_{n'}\geq \frac{|x|+\tilde{c}|t|}{\tilde{c}-c}$, we have
$$
|x+x_{n'}|\leq |x|+ct_{n'}\leq \tilde{c}(t_{n'}+t).
$$
It follows from last inequality and \eqref{asym-eq26} that
$$
\tilde{u}(x,t)=\lim_{n\to\infty}u(x+x_{n'},t+t_{n'};u_0)\geq \liminf_{s\to\infty}\inf_{|y|\leq \tilde{c}s}u(y,s;u_0)>0,\quad \forall\ x\in\R^N, \ \ t\in\R.
$$
Hence $\inf_{(x,t)\in\R^{N+1}}\tilde{u}(x,t)>0$.

 Next, we claim that $\tilde{u}(x,t)=\frac{a}{b}$ for every $x\in\R^N,\ t\in\R$. Indeed, let $\underline{u}_0=\inf_{(x,t)\in\R^{N+1}}\tilde{u}(x,t)$ and $\overline{u}_0(x,t)=\sup_{(x,t)\in\R^{N+1}}\tilde{u}(x,t)$. For every $t_0\in\R$, let $\overline{U}(t,t_0)$ and $\underline{U}(t,t_0)$ be the solution of the ODEs
\begin{equation}\label{asym-eq34}
\begin{cases}
\overline{U}_t= \big(a-(b+\chi_2\mu_2-\chi_1\mu_1)\overline{U}\big)\overline{U}\nonumber\\
\qquad\,\,\, +\frac{1}{\lambda_2}\Big(\left[(\chi_2\mu_2\lambda_2-\chi_1\mu_1\lambda_1)_{+}+\chi_1\mu_1(\lambda_1-\lambda_2)_{+}\right]\overline{u}_0 \nonumber\\
\qquad\,\,\,  -\left[(\chi_2\mu_2\lambda_2-\chi_1\mu_1\lambda_1)_{-}+\chi_1\mu_1(\lambda_1-\lambda_2)_{-}\right]\underline{u}_0\Big)\overline{U},\quad t>t_0\\
\overline{U}(t_0,t_0)=\overline{u}_0
\end{cases}
\end{equation}
and
\begin{equation}\label{asym-eq35}
\begin{cases}
\underline{U}_t=\big(a-(b+\chi_2\mu_2-\chi_1\mu_1)\underline{U}\big)\underline{U}\nonumber\\
\qquad\,\,\, +\frac{1}{\lambda_2}\Big(\left[(\chi_2\mu_2\lambda_2-\chi_1\mu_1\lambda_1)_{+}+\chi_1\mu_1(\lambda_1-\lambda_2)_{+}\right]\underline{u}_0 \nonumber\\
 \qquad \,\,\, -\left[(\chi_2\mu_2\lambda_2-\chi_1\mu_1\lambda_1)_{-}+\chi_1\mu_1(\lambda_1-\lambda_2)_{-}\right]\overline{u}_0\Big)\underline{U},\quad t>t_0\\
\underline{U}(t_0,t_0)=\underline{u}_0
\end{cases}
\end{equation}
respectively.
It follows from the arguments used to establish \eqref{asym-eq3} and \eqref{asym-eq5} that
\begin{equation}\label{asym-eq35}
\underline{U}(t-t_0,0)=\underline{U}(t,t_0)\leq \tilde{u}(x,t),\quad \forall\ x\in\R^N,\ t\geq t_0
\end{equation}
and
\begin{equation} \label{asym-eq36}
\overline{U}(t-t_0,0)=\overline{U}(t,t_0)\geq \tilde{u}(x,t),\quad \forall\ x\in\R^N,\ \ t\geq t_0
\end{equation}
respectively. Note that for every $t\in\R$ fixed, we have that
\begin{align} \label{asym-eq37}
\lim_{t_0\to-\infty}\overline{U}(t,t_0)=\frac{1}{b+\chi_2\mu_2-\chi_1\mu_1}
\Big\{&a+\frac{1}{\lambda_2}\big(\left[(\chi_2\mu_2\lambda_2-\chi_1\mu_1\lambda_1)_{+}+\chi_1\mu_1(\lambda_1-\lambda_2)_{+}\right]\overline{u}_{0} \nonumber\\ &-\left[(\chi_2\mu_2\lambda_2-\chi_1\mu_1\lambda_1)_{-}+\chi_1\mu_1(\lambda_1-\chi_2)_{-}\right]\underline{u}_0\big)\Big\}
\end{align}
and
\begin{align} \label{asym-eq38}
\lim_{t_0\to-\infty}\underline{U}(t,t_0)=\frac{1}{b+\chi_2\mu_2-\chi_1\mu_1}
\Big\{& a+\frac{1}{\lambda_2}\big(\left[(\chi_2\mu_2\lambda_2-\chi_1\mu_1\lambda_1)_{+}+\chi_1\mu_1(\lambda_1-\lambda_2)_{+}\right]\underline{u}_{0} \nonumber\\ &-\left[(\chi_2\mu_2\lambda_2-\chi_1\mu_1\lambda_1)_{-}+\chi_1\mu_1(\lambda_1-\chi_2)_{-}\right]\overline{u}_0\big)\Big\}.
\end{align}
Combining \eqref{asym-eq35} and \eqref{asym-eq38}, we have that
\begin{align}\label{asym-eq39}\frac{1}{b+\chi_2\mu_2-\chi_1\mu_1}
\Big\{&a+\frac{1}{\lambda_2}\big(\left[(\chi_2\mu_2\lambda_2-\chi_1\mu_1\lambda_1)_{+}+\chi_1\mu_1(\lambda_1-\lambda_2)_{+}\right]\underline{u}_{0} \nonumber\\ &-\left[(\chi_2\mu_2\lambda_2-\chi_1\mu_1\lambda_1)_{-}+\chi_1\mu_1(\lambda_1-\chi_2)_{-}\right]\overline{u}_0\big)\Big\}  \leq \underline{u}_0.
\end{align}
Combining \eqref{asym-eq36} and \eqref{asym-eq37}, we have that
\begin{align}\label{asym-eq40}
\frac{1}{b+\chi_2\mu_2-\chi_1\mu_1}
\Big\{ &a+\frac{1}{\lambda_2}\big(\left[(\chi_2\mu_2\lambda_2-\chi_1\mu_1\lambda_1)_{+}+\chi_1\mu_1(\lambda_1-\lambda_2)_{+}\right]\overline{u}_{0} \nonumber\\ & -\left[(\chi_2\mu_2\lambda_2-\chi_1\mu_1\lambda_1)_{-}+\chi_1\mu_1(\lambda_1-\chi_2)_{-}\right]\underline{u}_0\big)\Big\} \geq \overline{u}_0.
\end{align}
Thus, it follows from inequalities \eqref{asym-eq39} and \eqref{asym-eq40} that
\begin{equation}\label{asym-eq41}
\left(b+\chi_2\mu_2-\chi_1\mu_1-\frac{1}{\lambda_2}\Big( |\chi_1\mu_1\lambda_1-\chi_2\mu_2\lambda_2|+\chi_1\mu_1|\lambda_1-\lambda_2| \Big) \right)(\overline{u}_0-\underline{u}_0)\leq 0.
\end{equation}
Similarly, for every $t_0\in\R$, by considering $\overline{V}(t,t_0)$ and $\underline{V}(t,t_0)$ to be the solutions of the ODEs
\begin{equation}\label{asym-eq42}
\begin{cases}
\overline{V}_t= \big(a-(b+\chi_2\mu_2-\chi_1\mu_1)\overline{V}\big)\overline{V}\nonumber\\
\qquad\,\,  +\frac{1}{\lambda_1}\Big(\left[(\chi_2\mu_2\lambda_2-\chi_1\mu_1\lambda_1)_{+}+\chi_2\mu_2(\lambda_1-\lambda_2)_{+}\right]\overline{u}_0 \nonumber\\ \qquad \,\, -\left[(\chi_2\mu_2\lambda_2-\chi_1\mu_1\lambda_1)_{-}+\chi_2\mu_2(\lambda_1-\lambda_2)_{-}\right]\underline{u}_0\Big)\overline{V},\quad t>t_0\\
\overline{V}(t_0,t_0)=\overline{u}_0
\end{cases}
\end{equation}
and
\begin{equation}\label{asym-eq43}
\begin{cases}
\underline{V}_t=\big(a-(b+\chi_2\mu_2-\chi_1\mu_1)\underline{V}\big)\underline{V}\nonumber\\
\qquad\,\, +\frac{1}{\lambda_1}\Big(\left[(\chi_2\mu_2\lambda_2-\chi_1\mu_1\lambda_1)_{+}+\chi_2\mu_2(\lambda_1-\lambda_2)_{+}\right]\underline{u}_0 \nonumber\\
 \qquad\,\, -\left[(\chi_2\mu_2\lambda_2-\chi_1\mu_1\lambda_1)_{-}+\chi_2\mu_2(\lambda_1-\lambda_2)_{-}\right]\overline{u}_0\Big)\underline{V},\quad t>t_0\\
\underline{V}(t_0,t_0)=\underline{u}_0
\end{cases}
\end{equation}
respectively. Using systems \eqref{asym-eq42} and \eqref{asym-eq43}, similar arguments used to establish \eqref{asym-eq41} yield that \begin{equation}\label{asym-eq44}
\left(b+\chi_2\mu_2-\chi_1\mu_1-\frac{1}{\lambda_1}\Big( |\chi_1\mu_1\lambda_1-\chi_2\mu_2\lambda_2|+\chi_2\mu_2|\lambda_1-\lambda_2| \Big) \right)(\overline{u}_0-\underline{u}_0)\leq 0.
\end{equation}
It follows from inequalities \eqref{asym-eq41} and \eqref{asym-eq44} that
\begin{equation}\label{asym-eq45}
(b+\chi_2\mu_2-\chi_1\mu_1-K)(\overline{u}_0-\underline{u}_0)\leq 0.
\end{equation}
Since \eqref{main-asym-eq} holds, it follows from the last inequality that $\overline{u}_{0}=\underline{u}_0$. Combining this with inequalities \eqref{asym-eq39} and \eqref{asym-eq40} we obtain that $\overline{u}_0=\underline{u}_0=\frac{a}{b}$.   Hence, we have that $\tilde{u}(x,t)=\frac{a}{b}$ for every $x\in\R^N$ and $t\in\R$. In particular, we have that $\tilde{u}(0,0)=\frac{a}{b}$, which contradicts \eqref{asym-eq33}.

 Hence the claim is true and  Theorem D is thus proved.
\end{proof}


\begin{thebibliography}{9}








\bibitem{BBTW}
  N. Bellomo, A. Bellouquid,  Y. Tao, and M. Winkler,
   Toward a mathematical theory of Keller-Segel models of pattern formation in biological tissues,
  {\it  Math.~Models Methods Appl.~Sci.}, {\bf 25} (2015), 1663-1763.



\bibitem{Berestycki1} H. Berestycki, F. Hamel and G. Nadin, Asymptotic spreading in heterogeneous diffusive excita media,
{\it Journal of Functional Analysis}, {\bf 255} (2008), 2146-2189.

\bibitem{BeHaNa1} H. Berestycki, F. Hamel, and N. Nadirashvili, The
speed of propagation for KPP type problems, I - Periodic framework, {\it J. Eur. Math. Soc.}, {\bf 7} (2005),
172-213.

\bibitem{BeHaNa2} H. Berestycki, F. Hamel, and N. Nadirashvili, The
speed of propagation for KPP type problems, II - General domains,  {\it J. Amer. Math. Soc.}, {\bf  23}  (2010),  no. 1,  1-34.


\bibitem{Henri1} H. Berestycki and G. Nadin, Asymptotic spreading for general heterogeneous Fisher-KPP type, preprint.


\bibitem{DiNa} J.I. Diaz and T. Nagai, Symmetrization in a parabolic-elliptic system related to chemotaxis,
{\it Advances in Mathematical Sciences and Applications}, {\bf 5} (1995), 659-680.


\bibitem{DiNaRa} J.I. Diaz, T.  Nagai, J.-M. Rakotoson, Symmetrization Techniques on Unbounded Domains: Application to a Chemotaxis System on $\R^{N}$,
 {\it J. Differential Equations}, {\bf 145} (1998),  156-183.

\bibitem{EEspejoand-TSuzuki} E. Espejoand, T. Suzuki, Global existence and blow-up for a system describing the aggregation of microglia, {\it Appl. Math. Lett.} {\bf 35} (2014), 29-34.



\bibitem{Fis} R. Fisher, The wave of advance of advantageous genes,
{\it Ann. of Eugenics}, {\bf 7} (1937),  355-369.




\bibitem{Fre} M. Freidlin, On wave front propagation in periodic media.{\it In: Stochastic analysis and applications, ed. M. Pinsky, Advances in probablity and related topics}, 7:147-166, 1984.


\bibitem{FrGa} M. Freidlin and J. G\" artner, On the propagation of concentration waves in periodic and ramdom media, {\it Soviet Math. Dokl.}, {\bf 20} (1979),  1282-1286.


\bibitem{Friedman} A. Friedman, Partial Differential Equation of Parabolic Type, Prentice-Hall, Inc., Englewood Cliffs, N.J., 1964.


\bibitem{FuMiTs} M. Funaki, M. Mimura and T. Tsujikawa, Travelling front solutions arising in the chemotaxis-growth model,
 {\it Interfaces Free Bound.}, {\bf 8} (2006), 223-245.

\bibitem{GaSaTe}  E. Galakhov, O. Salieva and J. I. Tello,  On a parabolic-elliptic system with chemotaxis and logistic type growth,{\it J. Differential Equations}, {\bf  261} (2016), no. 8, 4631-4647.



\bibitem{Hal} Jack K. Hale,  Asymptotic Behavior of Dissipative Systems,  Mathematical Surveys and Monographs, {\bf 25},
 American Mathematical Society, Providence, RI, 1988.


\bibitem{Dan Henry} D. Henry, Geometric Theory of Semilinear Parabolic Equations, Springer-Verlag Berlin Heidelberg New York, 1981.



 \bibitem{HiPa} T. Hillen and K. J. Painter,
  A user's guide to PDE models for chemotaxis,
  {\it  J. Math. Biol.} {\bf 58} (2009), no. 1-2, 183-217.

  \bibitem{HiPa1} T. Hillen and K.  Painter,  Global existence for a parabolic chemotaxis model with prevention of overcrowding,
   {\it Adv. in Appl. Math.} {\bf 26} (2001), no. 4, 280-301.

\bibitem{HiPo} T. Hillen and A.  Potapov,
 The one-dimensional chemotaxis model: global existence and asymptotic profile,
 {\it  Math. Methods Appl. Sci.} {\bf 27} (2004), no. 15, 1783-1801.


\bibitem{Hor} D. Horstmann,
From 1970 until present: the Keller–Segel model in chemotaxis and its consequences,
{\it Jahresber. Dtsch. Math.-Ver.}, {\bf 105} (2003),  103-165.


\bibitem{Horstman} D. Horstmann, Generalizing the Keller-Segel model: Lyapunov functionals, steady state analysis, and blow-up results for multispecies chemotaxis models in the presence of attraction and repulsion between competitive interacting species, {\it J. Nonlinear Sci.} {\bf 21}(2) (2011), 231-270.



\bibitem{Jin} H. Y. Jin, Boundedness of the attraction-repulsion Keller-Segel system, {\it J. Math. Anal. Appl.} {\bf 422}(2) (2015), 1463-1478.

\bibitem{KKAS} Kyungkeun Kanga, Angela Steven, Blowup and global solutions in a chemotaxis-growth system,
{\it Nonlinear Analysis}, {\bf 135} (2016), 57-72.

\bibitem{KeSe1} E.F. Keller and L.A. Segel, Initiation of slime mold aggregation viewed as an instability, {\it J. Theoret. Biol.},
{\bf 26} (1970), 399-415.

\bibitem{KeSe2} E.F. Keller and L.A. Segel,  A  Model for chemotaxis, {\it  J. Theoret. Biol.}, {\bf 30} (1971),  225-234.


\bibitem{KPP} A. Kolmogorov, I. Petrowsky, and N.Piscunov,
A study of the equation of diffusion with increase in the quantity of matter, and its application to a
biological problem,
 {\it Bjul. Moskovskogo Gos. Univ.}, {\bf 1} (1937), 1-26.







\bibitem{LiZh} X. Liang and X.-Q. Zhao,
 Asymptotic speeds of spread and traveling waves for monotone semiflows
  with applications,  {\it Comm. Pure Appl. Math.},  {\bf 60}  (2007),  no. 1,
   1-40.

\bibitem{Lin_Mu_Gao} K. Lin, C. Mu, Y. Gao, Boundedness and blow up in the higher-dimensional  attraction-repulsion chemotaxis   with non-linear diffusion, {\it J. Differential Equations} 261(2016) 4524-4572.

\bibitem{LiZh1} X. Liang and X.-Q. Zhao, Spreading speeds and traveling waves for abstract
monostable evolution systems, {\it Journal of Functional Analysis},  {\bf 259} (2010),  857-903.

\bibitem{JLZAW}  J. Liu and Z. A. Wang, Classical solutions and steady states of an attraction-repulsion chemotaxis in one dimension, {J. Biol. Dyn.} 6(suppl. 1) (2012), 31-41.

\bibitem{PLJSZW}  P. Liu, J. Shi, and Z. A. Wang, Pattern formation of the attraction-repulsion Keller-Segel system, {\it Discrete Contin. Dyn. Syst.} Ser. B {\bf 18} (10) (2013), 2597-2625.



\bibitem{MLACLE}  M. Luca, A. Chavez-Ross, L. Edelstein-Keshet, and A. Mogilner, Chemotactic signaling, microglia, and alzheimers disease senile plaques: Is there a connection?, {\it Bull. Math. Biol.} {\bf 65}(4) (2003), 693-730.


\bibitem{Nad} G. Nadin, Traveling fronts in space-time periodic media,
{\it J. Math. Pures Anal.}, {\bf 92} (2009),  232-262.




\bibitem{NAGAI_SENBA_YOSHIDA} T. Nagai, T. Senba and K, Yoshida, Application of the Trudinger-Moser Inequality  to a Parabolic System of Chemotaxis, {\it Funkcialaj Ekvacioj}, {\bf 40} (1997), 411-433.



\bibitem{NoRuXi} J. Nolen, M. Rudd, and J. Xin, Existence of KPP
fronts in spatially-temporally periodic adevction and variational principle for propagation speeds, {\it
Dynamics of PDE}, {\bf 2} (2005),  1-24.

\bibitem{NoXi1} J. Nolen and J. Xin, Existence of KPP type fronts in
space-time periodic shear flows and a study of minimal speeds based on variational principle, {\it Discrete and
Continuous Dynamical Systems}, {\bf 13} (2005),  1217-1234.







\bibitem{SaSh1} R. Salako and W. Shen, Global existence and asymptotic behavior  of classical solutions to a parabolic-elliptic chemotaxis system with  logistic source on $\mathbb{R}^{N}$, {\it J. Differential Equations}, {\bf  262} (2017), no. 11, 5635-5690.

\bibitem{SaSh2} R. Salako and W. Shen, Spreading speeds and traveling waves of a parabolic-elliptic chemotaxis system with logistic source on $\R^N$, preprint, arXiv:1609.05387.

\bibitem{SeYu} George R. Sell and Yuncheng You,  Dynamics of Evolutionary Equations,  Applied Mathematical Sciences, {\bf 143},
 Springer-Verlag, New York, 2002.


\bibitem{She1} W. Shen, Variational principle for spatial spreading
speeds and generalized propgating speeds  in time almost and space periodic KPP models, {\it Trans. Amer. Math.
Soc.}, {\bf 362} (2010),  5125-5168.

\bibitem{She2} W. Shen, Existence of generalized traveling waves in time recurrent and space periodic monostable equations,
{\it  J. Appl. Anal. Comput.}, {\bf  1} (2011),  69-93.

    \bibitem{Sug} Y. Sugiyama, Global existence in sub-critical cases and finite time blow up in super critical cases to degenerate Keller-Segel systems, {\it Differential Integral Equations}, {\bf 19} (2006), no. 8, 841-876.

\bibitem{SuKu} Y.  Sugiyama and H. Kunii,  Global existence and decay properties for a degenerate keller-Segel model with a power factor in drift term,
    {\it J. Differential Equations}, {\bf  227} (2006), 333-364.




\bibitem{TeWi} J. I. Tello and M. Winkler, A chemotaxis system with logistic source,
 {\it Communications in Partial Differential Equations}, {\bf 32} (2007), 849-877.





\bibitem{YWang1} Yilong Wang, Global bounded weak solutions to a degenerate quasilinear attraction repulsion chemotaxis system with rotation, {\it Computers and Mathematics with Applications}, {\bf 72} (2016), 2226-2240.

\bibitem{WangXiang} Y. Wang, Zhaoyin Xiang, Boundedness in a quasilinear 2D parabolic-parabolic attraction-repulsion chemotaxis system. Discrete Contin, {\it Dyn. Syst. Ser. B}  21 (2016), no. 6, 1953-1973.


\bibitem{WaMuZh} L. Wang, C. Mu, and P. Zheng, On a quasilinear parabolic-elliptic chemotaxis system with logistic source,
{\it J. Differential Equations}, {\bf 256} (2014), 1847-1872.


\bibitem{Wei1} H. F.  Weinberger, Long-time behavior of a class of
biology models, {\it SIAM J. Math. Anal.}, {\bf 13} (1982),  353-396.

\bibitem{Wei2}H. F.  Weinberger, On spreading speeds and traveling
waves for growth and migration models in a periodic habitat, {\it J. Math. Biol.}, {\bf 45} (2002),  511-548.



\bibitem{win_jde}
   M. Winkler,  Aggregation vs.~global diffusive behavior in the higher-dimensional Keller-Segel model,
    {\it Journal of Differential Equations},  {\bf 248} (2010), 2889-2905.

\bibitem{win_JMAA_veryweak}
   M. Winkler,  Blow-up in a higher-dimensional chemotaxis system despite logistic growth restriction,
    {\it Journal of Mathematical Analysis and Applications}, {\bf 384} (2011), 261-272.

\bibitem{win_arxiv}
   M. Winkler, Finite-time blow-up in the higher-dimensional parabolic-parabolic Keller-Segel system,
    {\it J.~Math.~Pures Appl.},  {\bf 100} (2013), 748-767.


\bibitem{Win} M. Winkler,
 Global asymptotic stability of constant equilibria in a fully parabolic chemotaxis system with strong logistic dampening,
  {\it J. Differential Equations} {\bf  257} (2014), no. 4, 1056-1077.


\bibitem{win_JNLS}
   M. Winkler,  How far can chemotactic cross-diffusion enforce exceeding carrying capacities?
  {\it J. Nonlinear Sci.}, {\bf 24} (2014), 809-855.



\bibitem{YoYo} T. Yokota and N. Yoshino, Existence of solutions to chemotaxis dynamics with logistic source,
  Discrete Contin. Dyn. Syst. 2015, Dynamical systems, differential equations and applications. 10th AIMS Conference. Suppl., 1125-1133.



\bibitem{QZhanYLi}  Qingshan Zhang and Yuxiang Li,  An attraction-repulsion chemotaxis system with logistic source, {\it ZAMM·Z. Angew. Math. Mech}. {\bf 96}, No. 5, 570–584 (2016)/DOI 10.1002/zamm.201400311


\bibitem{PCHu1} P. Zheng, C. Mu, X. Hu, Boundedness in the higher dimensional attraction–repulsion chemotaxis-growth system, {\it Computers and Mathematics with Applications}, {\bf 72} (2016) 2194-2202.


\bibitem{ZhMuHuTi} P. Zheng, C. Mu, X. Hu, and Y. Tian, Boundedness of solutions in a chemotaxis system with nonlinear sensitivity and logistic source,
{\it J. Math. Anal. Appl.}, {\bf 424} (2015), 509-522.

\bibitem{Zla} A. Zlato\v s,  Transition fronts in inhomogeneous Fisher-KPP reaction-diffusion equations,
 {\it J. Math. Pures Appl.} (9) {\bf 98} (2012), no. 1, 89-102.



















































\end{thebibliography}
\end{document}